\documentclass{article}

\usepackage[T1]{fontenc}
\usepackage{times}
\usepackage{epsfig,graphicx,tabularx,color}
\usepackage[cp850]{inputenc}
\usepackage[english]{babel}
\usepackage{cite}
\usepackage{amsmath,amssymb,amsfonts,amsthm,mathrsfs,comment,mathdots,multirow,tikz}
\usepackage{rotating, longtable, relsize}
\usepackage[f]{esvect}
\usepackage{times}
\usepackage{fancyhdr}
\usepackage{lipsum}
\usepackage{nicefrac}

\usetikzlibrary{calc,3d}

\newtheorem{theorem}{Theorem}[section]

\newtheorem{definition}[theorem]{Definition}
\newtheorem{proposition}[theorem]{Proposition}

\theoremstyle{definition}

\def\XXint#1#2#3{{\setbox0=\hbox{$#1{#2#3}{\int}$}
     \vcenter{\hbox{$#2#3$}}\kern-.5\wd0}}

\def\ud{{\rm\,d}}

\def\C{\mathbb{C}}

\def\N{\mathbb{N}}

\def\R{\mathbb{R}}

\def\DD{\mathcal{D}}
\def\EE{\mathcal{E}}

\def\OO{\mathcal{O}}

\def\pr(#1){\left({#1}\right)}
\def\br[#1]{\left[{#1}\right]}

\def\abs#1{\left|{#1}\right|}
\def\norm#1{\left\|{#1}\right\|}

\def\pFq#1#2{{\,}_{#1}F_{#2}}

\def\ii{{\rm i}}

\newcommand{\bs}[1]{\boldsymbol{#1}}
\newcommand{\floor}[1]{\left\lfloor #1 \right\rfloor}

\usepackage{hyperref}
\usetikzlibrary{arrows,positioning}

\begin{document}

\title{Computing ${}_{q+1}{F}_q$ via rational conformal maps}
% Short title for running heads:

\author{%
{\sc John Lopez\thanks{Email: John.LopezSantander@umanitoba.ca}~~and Richard Mika\"el Slevinsky\thanks{Email: Richard.Slevinsky@umanitoba.ca}}\\[2pt]
Department of Mathematics, University of Manitoba, Winnipeg, Canada}
% Short list of authors for running heads:

\maketitle

%%%%%%%%%%%%%%%%%%%%%%%%%%%%%%%%%%%%%%%%%%%%%%%%%%%%%%%%%%%%%%%%%%%%%%%%%%%%%%%%%%%%%%%%%%%%%%%%%%%%%%%%%%%%%%%%
%%%%%%%%%%%%%%%%%%%%%%%%%%%%%%%%%%%%%%%%%%%%%%%%%%%%%%%%%%%%%%%%%%%%%%%%%%%%%%%%%%%%%%%%%%%%%%%%%%%%%%%%%%%%%%%%
%%%%%%%%%%%%%%%%%%%%%%%%%%%%%%%%%%%%%%%%%%%%%%%%%%%%%%%%%%%%%%%%%%%%%%%%%%%%%%%%%%%%%%%%%%%%%%%%%%%%%%%%%%%%%%%%
%%%%%%%%%%%%%%%%%%%%%%%%%%%%%%%%%%%%%%%%%%%%%%%%%%%%%%%%%%%%%%%%%%%%%%%%%%%%%%%%%%%%%%%%%%%%%%%%%%%%%%%%%%%%%%%%
\begin{abstract}
% Body of abstract:
It is well-known that certain M\"obius (linear fractional) transformations provide analytic continuation of the Gauss hypergeometric function. These transformations work by exchanging the roles of the three regular singular points and $0$, $1$, and $\infty$ of the Gauss hypergeometric differential equation. Not one of these transformations, however, brings the points $e^{\pm{\rm i}\pi/3}$ closer to the origin, and convergence in a neighbourhood of these is also troublesome. In this work, we produce a Maclaurin series in optimal complexity after rational conformal mapping that induces pointwise convergence to any ${}_{q+1}F_q$ throughout the cut complex plane $\mathbb{C}\setminus[1,\infty)$. We show that for the optimal parameter values this series converges faster than an approximate mapped Chebyshev series obtained by the ultraspherical spectral method.

% Keywords:
{\em Keywords}: Generalized hypergeometric functions. Rational approximation. Conformal mapping. Schwarz--Christoffel mapping.
\end{abstract}
% MSC classifications: 30C30 Schwarz--Christoffel-type mapping, 33C20 pFq, 33F05 computational aspects of special functions, 41A20 rational approx, 65D20 computation of special functions, 65E10 numerical methods in conformal mappings

%%%%%%%%%%%%%%%%%%%%%%%%%%%%%%%%%%%%%%%%%%%%%%%%%%%%%%%%%%%%%%%%%%%%%%%%%%%%%%%%%%%%%%%%%%%%%%%%%%%%%%%%%%%%%%%%
%%%%%%%%%%%%%%%%%%%%%%%%%%%%%%%%%%%%%%%%%%%%%%%%%%%%%%%%%%%%%%%%%%%%%%%%%%%%%%%%%%%%%%%%%%%%%%%%%%%%%%%%%%%%%%%%
%%%%%%%%%%%%%%%%%%%%%%%%%%%%%%%%%%%%%%%%%%%%%%%%%%%%%%%%%%%%%%%%%%%%%%%%%%%%%%%%%%%%%%%%%%%%%%%%%%%%%%%%%%%%%%%%
%%%%%%%%%%%%%%%%%%%%%%%%%%%%%%%%%%%%%%%%%%%%%%%%%%%%%%%%%%%%%%%%%%%%%%%%%%%%%%%%%%%%%%%%%%%%%%%%%%%%%%%%%%%%%%%%
\section{Introduction}

Generalized hypergeometric functions \cite[\S 16]{Olver-et-al-NIST-10} are formally equal to their Maclaurin series:
\begin{equation}\label{eq:pFqbyMac}
\pFq{p}{q}\left(\begin{array}{c}
\alpha_1,\ldots,\alpha_p\\
\beta_1,\ldots,\beta_q\end{array};
z\right) = \sum_{n=0}^\infty \dfrac{(\alpha_1)_n\cdots(\alpha_p)_n}{(\beta_1)_n\cdots(\beta_q)_n}\dfrac{z^n}{n!},
\end{equation}
where $(\alpha)_n = \Gamma(\alpha+n)/\Gamma(x)$ is the Pochhammer symbol for the rising factorial~\cite[\S 6.1.22]{Abramowitz-Stegun-65}. If some $\alpha_j \in -\N_0$, then the series terminates after at most $|\alpha_j|+1$ terms. The Maclaurin series has radius of convergence $0$ if $p > q+1$, $1$ if $p = q+1$, and $\infty$ if $p\le q$.

If $p\ge q+1$, the Maclaurin series may be analytically continued into an appropriately slit complex plane by the Meijer $G$-function~\cite[\S 16.18.1]{Olver-et-al-NIST-10}, or by algorithms that solve the generalized hypergeometric differential equation:
\begin{equation}\label{eq:pFqODE}
\frac{\ud}{\ud z}\prod_{i=1}^q\left(z\frac{\ud}{\ud z} + \beta_i - 1\right) u(z) = \prod_{i=1}^p\left(z\frac{\ud}{\ud z} + \alpha_i\right) u(z),\qquad u(0) = 1.
\end{equation}
When $p = q+1$, the orders of the differential operators on both sides of the equation are the same, $q+1$. Through Eq.~\eqref{eq:pFqODE}, it can be seen that there are three regular singular points, at $0$, $1$, and $\infty$. Certain M\"obius (linear fractional) transformations analytically continue the Gauss hypergeometric function, $\pFq{2}{1}$, to the whole cut plane $\C\setminus[1,\infty)$. These transformations represent a given $\pFq{2}{1}$ as a linear combination of at most two other $\pFq{2}{1}$'s with arguments that can generally be brought closer to the origin. Exchanging the roles of $0$, $1$, and $\infty$, however, is rather restrictive to the M\"obius transformations, causing $\{e^{\pm{\rm i}\pi/3}\}$ to be mapped to itelf under any of them, rendering numerical computation challenging in neighbourhoods of these fixed points.

Many numerical algorithms have been developed to compute generalized hypergeometric functions~\cite{Forrey-137-79-97,Becken-Schmelcher-126-449-00,Muller-90-179-01,Michel-Stoitsov-178-535-08,Colman-Cuyt-van-Deun-38-11-1-11,Willis-59-447-12,Doornik-84-1813-15,Pearson-Olver-Porter-74-821-17,Crespo-et-al-84-1-20,Johansson-45-30-1-19,Slevinsky-98-587-25}, with the majority focusing special cases with $p+q\le3$. Moreover, several techniques in analytic continuation have been derived, including the re-expansion formul\ae~of B\"uhring~\cite{Buhring-18-884-87,Buhring-19-1249-88}, N{\o}rlund~\cite{Norlund-94-289-55}, and the Lopez group~\cite{Lopez-Temme-39-349-13,Lopez-Pagola-Sinusia-421-982-15,Lopez-Pagola-Palacios-55-891-22}.

In this work, we present a numerical strategy to compute $\pFq{q+1}{q}$ with a rational conformal map that allows a Maclaurin series in the transformed variable to provide exponential convergence throughout the cut complex plane $\C\setminus[1,\infty)$. We draw inspiration from two sources: the polynomial conformal map in the recent work of Jacobs, Piret, and Fornberg~\cite{Jacobs-Piret-Fornberg-1-26} and the rational conformal map in the earlier work of Skorokhodov~\cite{Skorokhodov-502-636-03}. Secondly, we will show how the factored structure of the generalized hypergeometric differential equation ensures that an optimal complexity of ${\cal O}(qmn)$ flops is attained, where $n$ is the truncation degree and $1\le m\le 4$ is an integer map parameter. And thirdly, we shall provide a generalization of the Euler transformation~\cite[\S 15.8.1]{Olver-et-al-NIST-10} to increase the numerical stability of positively balanced functions in the neighbourhood of argument unity. We include a collection of numerical examples to demonstrate the effectiveness of the new approach. Our method is implemented in the {\sc Julia} package {\tt HypergeometricFunctions.jl}~\cite{Slevinsky-GitHub-HypergeometricFunctionsjl}.

%%%%%%%%%%%%%%%%%%%%%%%%%%%%%%%%%%%%%%%%%%%%%%%%%%%%%%%%%%%%%%%%%%%%%%%%%%%%%%%%%%%%%%%%%%%%%%%%%%%%%%%%%%%%%%%%
%%%%%%%%%%%%%%%%%%%%%%%%%%%%%%%%%%%%%%%%%%%%%%%%%%%%%%%%%%%%%%%%%%%%%%%%%%%%%%%%%%%%%%%%%%%%%%%%%%%%%%%%%%%%%%%%
%%%%%%%%%%%%%%%%%%%%%%%%%%%%%%%%%%%%%%%%%%%%%%%%%%%%%%%%%%%%%%%%%%%%%%%%%%%%%%%%%%%%%%%%%%%%%%%%%%%%%%%%%%%%%%%%
%%%%%%%%%%%%%%%%%%%%%%%%%%%%%%%%%%%%%%%%%%%%%%%%%%%%%%%%%%%%%%%%%%%%%%%%%%%%%%%%%%%%%%%%%%%%%%%%%%%%%%%%%%%%%%%%

\subsection{The polynomial conformal map of Jacobs, Piret, and Fornberg}

Recently, Jacobs, Piret, and Fornberg~\cite{Jacobs-Piret-Fornberg-1-26} constructed Maclaurin series to the Gauss hypergeometric function with a polynomial conformal map as argument:
\[
\pFq{2}{1}\left(\begin{array}{c}\alpha_1~\alpha_2\\\beta_1\end{array}; z = 1-(1-w)^m\right) \quad\Leftrightarrow\quad w = a(z;m) = 1-\sqrt[m]{1-z},\qquad m\in\N.
\]
As they show, the polynomial conformal map transforms the Gauss hypergeometric differential equation to:
\begin{align}\label{eq:ATODE}
u(0) & = 1,\nonumber\\
(1-w)\left[1 - (1-w)^m\right] u''(w) + \left\{ \beta_1m - \left[1-(1-w)^m\right] \left[(\alpha_1+\alpha_2)m+1\right] \right\} u'(w) - \alpha_1\alpha_2m^2(1-w)^{m-1}u(w) & = 0.
\end{align}
This transformation has a number of favourable properties. Firstly, the polynomial structure of the map ensures that an $(m+1)$-term recurrence relation exists for the coefficients in the Maclaurin series solution to Eq.~\eqref{eq:ATODE}, making the $n$-term truncation cost $\OO(mn)$ flops. Secondly, the convergence set $\{z\in\C: \abs{1-\sqrt[m]{1-z}} < 1\}$ of the algebraic transformation is much larger than the unit disk, and is increasing in $m$. Nevertheless, this set is bounded due to the algebraic growth in the transformation, reaching its zenith on the negative real axis at the point $z = 1-2^m$. To determine the radius of convergence of the corresponding Maclaurin series, the roots of the leading polynomial, $(1-w)\left[1-(1-w)^m\right]$ are analyzed: it is determined that the singular points are all regular and, apart from the one at the origin, are outside the unit disk $\{w\in\C:\abs{w}< 1\}$ for $2\le m\le 6$, and they report that $m=4$ provides the best balance between accuracy and efficiency. The crucial observation in their work is that despite adding regular singular points to the Gauss differential equation, the Maclaurin series in the transformed variable converges far beyond the unit disk in the original coordinates.

%%%%%%%%%%%%%%%%%%%%%%%%%%%%%%%%%%%%%%%%%%%%%%%%%%%%%%%%%%%%%%%%%%%%%%%%%%%%%%%%%%%%%%%%%%%%%%%%%%%%%%%%%%%%%%%%
%%%%%%%%%%%%%%%%%%%%%%%%%%%%%%%%%%%%%%%%%%%%%%%%%%%%%%%%%%%%%%%%%%%%%%%%%%%%%%%%%%%%%%%%%%%%%%%%%%%%%%%%%%%%%%%%
%%%%%%%%%%%%%%%%%%%%%%%%%%%%%%%%%%%%%%%%%%%%%%%%%%%%%%%%%%%%%%%%%%%%%%%%%%%%%%%%%%%%%%%%%%%%%%%%%%%%%%%%%%%%%%%%
%%%%%%%%%%%%%%%%%%%%%%%%%%%%%%%%%%%%%%%%%%%%%%%%%%%%%%%%%%%%%%%%%%%%%%%%%%%%%%%%%%%%%%%%%%%%%%%%%%%%%%%%%%%%%%%%

\subsection{Analytic continuation by rational transformations}\label{subsection:analyticcontinuationbyrationaltransformations}

Rational transformations have a rich history in analytically continuing generalized hypergeometric functions beyond the unit disk. We begin with transformations with analytic coefficients. Firstly, we have the continuation formula at infinity~\cite[\S 16.8]{Olver-et-al-NIST-10}:
\begin{align*}
& \pFq{q+1}{q}\left(\begin{array}{c} \alpha_1,\ldots,\alpha_{q+1}\\ \beta_1,\ldots,\beta_q\end{array}; z\right)
=
\sum_{j=1}^{q+1}\frac{\prod_{k=1, k\ne j}^{q+1}\frac{\Gamma(\alpha_k-\alpha_j)}{\Gamma(\alpha_k)}}{\prod_{k=1}^q\frac{\Gamma(\beta_k-\alpha_j)}{\Gamma(\beta_k)}}\frac{1}{(-z)^{\alpha_j}} \times\\
&\qquad \pFq{q+1}{q}\left(\begin{array}{c} \alpha_j, 1-\beta_1+\alpha_j,\ldots 1-\beta_q+\alpha_j\\ 1-\alpha_1+\alpha_j,\ldots, *, \ldots, 1-\alpha_{q+1}+\alpha_j\end{array};\frac{1}{z}\right),
\end{align*}
where the $*$ indicates that the entry $1-\alpha_j+\alpha_j$ is omitted. B\"uhring~\cite{Buhring-18-884-87,Buhring-19-1249-88} builds on this classical result with an expansion in shifted reciprocal powers:
\begin{align*}
& \pFq{q+1}{q}\left(\begin{array}{c} \alpha_1,\ldots, \alpha_{q+1}\\ \beta_1,\ldots,\beta_q\end{array}; z\right) = \sum_{j=1}^{q+1}\frac{(z_0-z)^{-\alpha_j}}{\Gamma(\alpha_j)}\sum_{n=0}^\infty \frac{\Gamma(\alpha_j+n)}{n!}\frac{\prod_{k=1,k\ne j}^{q+1}\frac{\Gamma(\alpha_k-\alpha_j-n)}{\Gamma(\alpha_k)}}{\prod_{k=1}^q\frac{\Gamma(\beta_k-\alpha_j-n)}{\Gamma(\beta_k)}}\\
& \qquad\qquad \times \pFq{q+1}{q}\left(\begin{array}{c} \alpha_1-\alpha_j-n,\ldots,\alpha_{q+1}-\alpha_j-n\\ \beta_1-\alpha_j-n,\ldots, \beta_q-\alpha_j-n\end{array};z_0\right)(z-z_0)^{-n}.
\end{align*}
N{\o}rlund~\cite[Eq. (1.21)]{Norlund-94-289-55} produced a continuation formula that converges in the left half-plane $\Re z < \frac{1}{2}$. More recently, the L\'opez group~\cite{Lopez-Temme-39-349-13,Lopez-Pagola-Sinusia-421-982-15,Lopez-Pagola-Palacios-55-891-22} has introduced a collection of continuation formulas based on one- or two-point Taylor expansions in certain integral representations, the first of which generalizes the N\o rlund series to converge in any rotated left half-plane. For any $w\in\C$, the expansion:
\begin{align*}
\pFq{q+1}{q}\left(\begin{array}{c} \alpha_1,\ldots, \alpha_{q+1}\\ \beta_1,\ldots, \beta_q\end{array};z\right) = (1-wz)^{-\alpha_1}\sum_{k=0}^\infty \frac{(\alpha_1)_k}{k!}\pFq{q+1}{q}\left(\begin{array}{c} -k, \alpha_2,\ldots,\alpha_{q+1}\\ \beta_1,\ldots,\beta_q\end{array};\frac{1}{w}\right)\left(\frac{wz}{wz-1}\right)^k,
\end{align*}
converges in the region $\{z\in\C : \abs{1-wz} > \max\{\abs{w},\abs{1-w}\}\abs{z}\}$.

\subsection{The rational conformal maps of Skorokhodov}

In~\cite{Skorokhodov-502-636-03}, Skorokhodov discussed the numerical evaluation of $\pFq{q+1}{q}$ via Maclaurin series and linear- and quadratic-fractional transformations. Given a unimodular complex number $\gamma\ne1$, the linear-fractional transformation considers:
\[
\pFq{q+1}{q}\left(\begin{array}{c}
\alpha_1,\ldots,\alpha_{q+1}\\
\beta_1,\ldots,\beta_q\end{array};
z = 1-\frac{1-w}{1-\gamma w}\right) \quad\Leftrightarrow\quad w = \frac{z}{1-\gamma(1-z)}.
\]
With parameter $\gamma$, an optimal value is determined to minimize $\abs{w(z)}$, achieving the fastest exponential convergence in this class of transformations. This value is:
\[
\gamma_{\rm opt} = -\frac{\abs{1-z}}{1-z},
\]
and, in so choosing, it ensures convergence in the cut complex plane $\C\setminus[1,\infty)$, something rather remarkable since for every fixed $\gamma$, the region of convergence is only a half-plane! For example, for $\gamma=-1$, the corresponding half-plane is $\Re z < 1$.

Again with unimodular $\gamma\ne1$, the quadratic-fractional transformation considers:
\[
\pFq{q+1}{q}\left(\begin{array}{c}
\alpha_1,\ldots,\alpha_{q+1}\\
\beta_1,\ldots,\beta_q\end{array};
z = 1-\left(\frac{1-w}{1-\gamma w}\right)^2\right) \quad\Leftrightarrow\quad w = \frac{1-\sqrt{1-z}}{1-\gamma\sqrt{1-z}},
\]
though Skorokhodov's original formula~\cite[Eq. (4)]{Skorokhodov-502-636-03} does not make the square and M\"obius transformations appear so clearly, and thereby the connection with the linear-fractional transformation. Skorokhodov found that this map is the Schwarz--Christoffel map from the unit disk to the cut complex plane, constructed in general as follows.
\begin{theorem}[Theorem 2.2 in~\cite{Driscoll-Trefethen-02}]
Let $P$ be the interior of a polygon $\Gamma$ having vertices $z_1,\ldots,z_n$ and interior angles $\alpha_1\pi,\ldots,\alpha_n\pi$ in counterclockwise order. Let $f$ be any conformal map from the unit disk $\{w\in\C:\abs{w}<1\}$ to $P$. Then:
\[
f(w) = A + C\int^w\prod_{k=1}^n\left(1-\frac{\zeta}{w_k}\right)^{\alpha_k-1}\ud\zeta,
\]
for some complex constants $A$ and $C$, where $z_k = f(w_k)$ for $k=1,\ldots,n$.
\end{theorem}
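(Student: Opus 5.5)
The plan is the classical argument via the derivative of the map and the reflection principle. We study the analytic function $g(w) = \log f'(w)$ on the unit disk; $f'$ never vanishes there because $f$ is conformal, so $g$ is well defined. The goal is to show that
\[
\frac{f''(w)}{f'(w)} = \sum_{k=1}^n \frac{\alpha_k-1}{w-w_k},
\]
which, after integrating twice, gives $f'(w) = C\prod_{k=1}^n (1-w/w_k)^{\alpha_k-1}$ and then the stated formula with integration constant $A$.

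\textbf{Step 1 (boundary behaviour).} By Carath\'eodory's theorem, $f$ extends to a homeomorphism of the closed disk onto $\overline{P}$. The prevertices $w_k$ lie on the unit circle in counterclockwise order. On each arc between consecutive prevertices, $f$ maps into a straight segment. By the Schwarz reflection principle, $f$ continues analytically across each open arc, and $f'\neq 0$ there (injectivity of the reflected map). Parametrizing the arc by $w=e^{\ii\theta}$, the tangent direction $\arg\bigl(\ii w f'(w)\bigr)$ is constant along the arc. Hence
\[
\Im g(w) = \arg f'(w) = \text{const} - \arg w - \tfrac{\pi}{2}
\]
on each arc.

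\textbf{Step 2 (local behaviour at prevertices).} This is the main obstacle. Near $w_k$, the image is a corner of interior angle $\alpha_k\pi$. I would compose with the local power map: the function $h(w) = (f(w)-z_k)^{1/\alpha_k}$ sends a half-disk neighbourhood of $w_k$ to a region bounded near $0$ by a straight segment. Reflection then shows that $h$ is analytic and univalent near $w_k$ with $h'(w_k)\neq 0$. Consequently
\[
f(w) = z_k + (w-w_k)^{\alpha_k}\phi(w)
\]
with $\phi$ analytic and $\phi(w_k)\neq 0$. Differentiating gives
\[
\frac{f''}{f'} = \frac{\alpha_k-1}{w-w_k} + \text{(analytic)}.
\]
Care is needed for vertices at infinity or for $\alpha_k=2$ (slits), as in the cut-plane application. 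There I would use $1/(f-z_k)$ or a M\"obius change of variable to obtain the same local form. The standard theorem assumes a bounded polygon, and I would treat the unbounded case by this variant.

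\textbf{Step 3 (global identification).} Set
\[
F(w) = \frac{f''(w)}{f'(w)} - \sum_{k}\frac{\alpha_k-1}{w-w_k}.
\]
By Steps 1--2 and reflection across every arc, $F$ extends to a function analytic on a neighbourhood of the closed disk. Its reflected continuation outside the disk (via $w\mapsto 1/\bar w$) makes it entire and bounded. Alternatively, I would argue via the harmonic function $\Im\bigl[g(w) - \sum_k(\alpha_k-1)\log(1-w/w_k)\bigr]$. The factor $(1-w/w_k)^{\alpha_k-1}$ changes argument by exactly $(\alpha_k-1)\pi$ across $w_k$, matching the turning of the boundary tangent, which is $(1-\alpha_k)\pi$ corrected by the $-\arg w$ term. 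This makes the harmonic function continuous on the circle and equal to a constant, using $\sum_k(1-\alpha_k) = 2$ (total turning), which cancels the winding of $\arg w$. By the maximum principle it is constant in the disk. Thus $g(w) = \log C + \sum_k(\alpha_k-1)\log(1-w/w_k)$, and integrating $f'$ yields the theorem.
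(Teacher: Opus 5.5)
The paper gives no proof of this statement; it is quoted from Driscoll and Trefethen, so there is no in-paper argument to compare with. Your outline is the classical reflection-principle proof of that theorem and is essentially correct, but two places in Step 3 need tightening.

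\textbf{The Liouville route.} As written, ``entire and bounded'' only gives $F\equiv c$, which would leave a spurious factor $e^{cw}$ in $f'$. You also need $F(w)\to0$ as $w\to\infty$. You further need the continuations across different arcs to give one function on $\abs{w}>1$, even though the reflections in different sides are different anti-conformal maps. Both facts follow because the rotation and translation of any line reflection cancel in the logarithmic derivative:
\[
\frac{f''}{f'}(w)=-\frac{2}{w}-\frac{1}{w^{2}}\,\overline{\left(\frac{f''}{f'}\right)\!\left(\frac{1}{\bar w}\right)},\qquad \abs{w}>1.
\]
This is $O(1/w)$, as is $\sum_k(\alpha_k-1)/(w-w_k)$, so $F\to0$ and hence $F\equiv0$.

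\textbf{The harmonic route.} Continuity of $\Im\bigl[g-\sum_k(\alpha_k-1)\log(1-w/w_k)\bigr]$ at $w_k$ must come from Step 2's factorization $f'(w)=(w-w_k)^{\alpha_k-1}\chi(w)$, with $\chi$ analytic and nonvanishing. Matching tangent directions is not enough, because it fixes the jump of $\arg f'$ only modulo $2\pi$. In the paper's example the jump is $3\pi$ at the vertex at infinity ($\alpha_2=-2$) and $-\pi$, not $\pi$, at the slit tip ($\alpha_1=2$). With the factorization in hand, the function is continuous on the closed disk and your maximum-principle argument is complete.

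\textbf{Vertices at infinity.} The paper does need these, since its slit plane has $z_2=\infty$. Your M\"obius device works whenever the two incident sides lie on lines through a common finite point $c$. Then $1/(f-c)=e^{\ii\theta}h^{-\alpha_k}$ with $h$ analytic and $h'(w_k)\ne0$, so $f''/f'$ again has residue $\alpha_k-1$; this covers the paper's case with $c=1$. It fails for parallel incident sides ($\alpha_k=0$, e.g.\ a strip), where the inverted corner has angle $0$. There use $h=\exp(\lambda(f-c))$ for a suitable $\lambda\in\C$: $f$ is then logarithmic at $w_k$, and $f''/f'$ still has residue $-1=\alpha_k-1$. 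Conversely, a finite slit tip ($\alpha_k=2$) needs no special treatment, since $(f-z_k)^{1/2}$ already straightens it exactly as in your Step 2.
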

Now, if we define the infinite $2$-gon with vertices $z_1 = 1$ and $z_2 = \infty$ and interior angles $\alpha_1 = 2$ and $\alpha_2 = -2$, respectively, and we fix the first prevertex on the unit circle at $w_1 = 1$, then:
\begin{equation}\label{eq:SchwarzChristoffelSingleCut}
f(w) = A + C\int^w \frac{1-\zeta}{(1-\zeta/w_2)^3}\ud\zeta = A + C\frac{w_2}{2}\frac{1+w_2-2w}{(1-w/w_2)^2}.
\end{equation}
Choosing the conformal center to be the origin, $f(0) = 0$, and fixing $f(1) = 1$ and letting $\gamma=w_2^{-1}$, we recover $f(w)$ as Skorokhodov's quadratic-fractional transformation. Thus it is clear that the quadratic-fractional transformation is a biholomorphic conformal map from the open unit disk onto the cut complex plane $\C\setminus[1,\infty)$. Moreover, there is an analogous optimal parameter value, equivalently an optimal prevertex, that minimizes $\abs{w(z)}$:
\[
\gamma_{\rm opt} = -\frac{\abs{\sqrt{1-z}}}{\sqrt{1-z}}.
\]

%%%%%%%%%%%%%%%%%%%%%%%%%%%%%%%%%%%%%%%%%%%%%%%%%%%%%%%%%%%%%%%%%%%%%%%%%%%%%%%%%%%%%%%%%%%%%%%%%%%%%%%%%%%%%%%%
%%%%%%%%%%%%%%%%%%%%%%%%%%%%%%%%%%%%%%%%%%%%%%%%%%%%%%%%%%%%%%%%%%%%%%%%%%%%%%%%%%%%%%%%%%%%%%%%%%%%%%%%%%%%%%%%
%%%%%%%%%%%%%%%%%%%%%%%%%%%%%%%%%%%%%%%%%%%%%%%%%%%%%%%%%%%%%%%%%%%%%%%%%%%%%%%%%%%%%%%%%%%%%%%%%%%%%%%%%%%%%%%%
%%%%%%%%%%%%%%%%%%%%%%%%%%%%%%%%%%%%%%%%%%%%%%%%%%%%%%%%%%%%%%%%%%%%%%%%%%%%%%%%%%%%%%%%%%%%%%%%%%%%%%%%%%%%%%%%

\section{Our rational conformal map}\label{section:rationalconformalmap}

Given a unimodular complex number $\gamma\ne1$ and a natural number $m\in\N$, consider the Maclaurin series to the generalized hypergeometric function with the rational conformal map as argument:
\begin{equation}\label{eq:RAT}
\pFq{q+1}{q}\left(\begin{array}{c}
\alpha_1,\ldots,\alpha_{q+1}\\
\beta_1,\ldots,\beta_q\end{array};
z = 1-\left(\frac{1-w}{1-\gamma w}\right)^m\right) \quad\Leftrightarrow\quad w = r(z; \gamma, m)= \frac{1-\!\!\sqrt[m]{1-z}}{1-\gamma\!\!\sqrt[m]{1-z}}.
\end{equation}
Observe that Eq.~\eqref{eq:RAT} draws from both the polynomial map of Jacobs, Piret, and Fornberg~\cite{Jacobs-Piret-Fornberg-1-26}, $r(z; 0, m) \equiv a(z; m)$, and the linear- and quadratic-fractional maps of Skorokhodov~\cite{Skorokhodov-502-636-03}. It has a number of useful properties, which we will describe below.

Firstly, it follows from the Schwarz--Christoffel biholomorphism that $\abs{r(z;\gamma,2)} \le 1$ for $z\in\C$ and the bound is only attained on the cut $[1,\infty)$. Secondly, there is an analogous optimal parameter value for every $m$ to the linear- and quadratic-fractional maps of Skorokhodov that maximizes the magnitude of the denominator:
\[
\gamma_{\rm opt} = -\frac{\abs{\!\!\sqrt[m]{1-z}}}{\!\!\sqrt[m]{1-z}},\quad\Rightarrow\quad r(z;\gamma_{\rm opt}, m) = \frac{1-\!\!\sqrt[m]{1-z}}{1+\abs{\!\!\sqrt[m]{1-z}}}.
\]
Thirdly, to avoid subtractive cancellation near the origin, $r(z;\gamma,m)$ can be expressed as:
\[
r(z;\gamma,m) = \frac{z}{(1-\gamma\!\!\sqrt[m]{1-z})[1+\!\!\sqrt[m]{1-z} + (\!\!\sqrt[m]{1-z})^2 + \cdots + (\!\!\sqrt[m]{1-z})^{m-1}]},
\]
by the difference of powers.

Figure~\ref{fig:contourf_conformalmaps} illustrates the modulus of $a(z;m)$, $r(z;-1,m)$ and $r(z;\gamma_{\rm opt},m)$ for $m=2,3,4$, where it is easy to see that the rational-algebraic transformations are uniformly bounded over the whole complex plane. It is also apparent that both parameters $\gamma$ and $m$ have a strong effect on the modulus, findings we have summarized as follows, with the proof relegated to Appendix~\ref{Appendix:Inequalities}.

\begin{theorem}\label{theorem:Inequalities}
Consider $r(z;\gamma,m)$ defined in Eq.~\eqref{eq:RAT} defined by the principal branch of the $m^{th}$ root.
Then, for $2\le m < n$:
\[
\abs{r(z;-1,n)} \le \abs{r(z;\gamma_{\rm opt},m)} \le \abs{r(z;-1,m)} \le 1,\qquad \forall z\in\C.
\]
\end{theorem}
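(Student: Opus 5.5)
Write $s = \sqrt[m]{1-z}$ for the principal branch. For $z\in\C$ this places $s$ in the closed sector $\abs{\arg s}\le \pi/m$, and $z\in[1,\infty)$ corresponds to $\arg(1-z)=\pm\pi$. The plan is to work entirely in the $s$ variable. There we have
\[
\abs{r(z;-1,m)} = \frac{\abs{1-s}}{\abs{1+s}},\qquad \abs{r(z;\gamma_{\rm opt},m)} = \frac{\abs{1-s}}{1+\abs{s}}.
\]
I will prove the three inequalities from right to left.

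\textbf{Rightmost inequality.} The bound $\abs{r(z;-1,m)}\le1$ is equivalent to $\abs{1-s}\le\abs{1+s}$, that is, to $\Re s\ge0$. This holds because $\abs{\arg s}\le\pi/m\le\pi/2$ for $m\ge2$.

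\textbf{Middle inequality.} By the triangle inequality, $\abs{1+s}\le 1+\abs{s}$, and this gives $\abs{r(z;\gamma_{\rm opt},m)}\le\abs{r(z;-1,m)}$ immediately.

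\textbf{Leftmost inequality.} This is the main step. Write $1-z=\rho e^{\ii\theta}$ with $\abs{\theta}\le\pi$, so that the $n$-th root is $t=\rho^{1/n}e^{\ii\theta/n}$ and the $m$-th root is $s=\rho^{1/m}e^{\ii\theta/m}$. Put $R=\rho^{1/n}$. Using $\abs{1\mp t}^2=1+R^2\mp2R\cos(\theta/n)$, the squared quantities become
\[
\abs{r(z;-1,n)}^2=\frac{1+R^2-2R\cos(\theta/n)}{1+R^2+2R\cos(\theta/n)},\qquad \abs{r(z;\gamma_{\rm opt},m)}^2=\frac{1+S^2-2S\cos(\theta/m)}{(1+S)^2},
\]
with $S=R^{n/m}$. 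I will compare the two expressions in two stages. First, for fixed $\theta$, the map $x\mapsto\frac{1+x^2-2xc}{1+x^2+2xc}$ with $c\ge0$ is a function of $u=x+1/x\ge2$ alone, namely $\frac{u-2c}{u+2c}$, which increases in $u$. Second, the map $x\mapsto\frac{1+x^2-2xc'}{(1+x)^2}=1-\frac{2(1+c')x}{(1+x)^2}$ is likewise a function of $u$, namely $1-\frac{2(1+c')}{u+2}$, which also increases in $u$.

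The obstacle is that these two functions are evaluated at different moduli, $R$ versus $S=R^{n/m}$. Since $\abs{\log S}\ge\abs{\log R}$, the value of $u$ at $S$ is at least the value of $u$ at $R$. Monotonicity in $u$ therefore lets me replace $S$ by $R$ in the $\gamma_{\rm opt}$ expression, and doing so only decreases it, which is the direction the comparison needs.

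It then remains to prove, for the single variable $u\ge2$,
\[
\frac{u-2\cos(\theta/n)}{u+2\cos(\theta/n)} \le 1-\frac{2(1+\cos(\theta/m))}{u+2}.
\]
Clearing denominators and writing $a=\cos(\theta/n)$ and $b=\cos(\theta/m)$, this becomes
\[
(u-2a)(u+2)\le(u+2a)(u-2b).
\]
Expanding and cancelling reduces it to $u(1+b-2a)\le 4a(1-b)$. I expect this to be the delicate part. It is not true for all $u$: as $u\to\infty$ the left side dominates whenever $1+b-2a>0$. So the reduction above is too lossy, and I must not discard the coupling between $R$ and $S$.

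The fix I would try first is to keep $S=R^{n/m}$ explicitly. The left-hand side $\abs{r(z;-1,n)}$ tends to $1$ as $R\to\infty$. For the limit comparison I will rely on the $\gamma_{\rm opt}$ side also tending to $1$, so for large $R$ the comparison reduces to the decay rates of $1-(\cdot)$. These are $4a/R$ on the $n$ side and $2(1+b)/S$ on the $m$ side, and since $S\gg R$ the inequality holds asymptotically. Near $R=1$ the inequality is $\frac{1-a}{1+a}\le\frac{1-b}{2}$, which holds because $a\ge\cos(\pi/2n)$ is close to $1$ while $b\le1$. More precisely, it reduces to $\tan^2(\theta/2n)\le\sin^2(\theta/2m)$, true since $\theta/2n<\theta/2m$ and the angles are small. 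This step should be verified with care.

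For the intermediate range, I would set $R=e^{\sigma}$ and treat both sides as functions of $\sigma\ge0$; the case $\sigma\le0$ follows by the symmetry $R\mapsto1/R$, which preserves the left side and only helps the right. The aim is to show that their difference is monotone, by differentiation in $\sigma$, using $\cosh(n\sigma/m)\ge\cosh\sigma$. This inequality between the two moduli is where I expect the real work of the proof to lie.
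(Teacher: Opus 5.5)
Your right-hand and middle inequalities are correct. They are also more direct than the paper's appeals to Schwarz--Christoffel (which only covers $m=2$) and to Skorokhodov's optimization. The gap is the leftmost inequality, which is the substance of the theorem: it is not proved.

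You verify only the two ends of the range, $\sigma\to\pm\infty$ and $\sigma=0$. At $\sigma=0$ the claim is $\tan^2(\theta/2n)\le\sin^2(\theta/2m)$, and you leave this at ``the angles are small''. That is not an argument, since $\tan x>\sin x$; the paper needs explicit bounds here, namely $\sin x\ge x-x^3/6$ and $\tan y\le y/(1-y^2/2)$.

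More seriously, the strategy you propose for the intermediate range cannot work. In your variables,
\[
1-\abs{r(z;-1,n)}^2=\frac{2\cos(\theta/n)}{\cosh\sigma+\cos(\theta/n)},\qquad
1-\abs{r(z;\gamma_{\rm opt},m)}^2=\frac{\cos^2(\theta/2m)}{\cosh^2(n\sigma/2m)}.
\]
Take $\theta=0$ (real $z<1$, where $\gamma_{\rm opt}=-1$). The difference of the two sides is then $1/\cosh^2(\sigma/2)-1/\cosh^2(n\sigma/2m)$. It vanishes at $\sigma=0$, is strictly positive for $\sigma>0$, and tends to $0$ as $\sigma\to\infty$. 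So it is not monotone, and endpoint checks combined with monotonicity of the difference cannot close the argument.

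What is missing is a way to exploit the coupling $S=R^{n/m}$ uniformly in $\sigma$. The paper proceeds as follows:
\begin{itemize}
\item It clears denominators in the two expressions above and writes $\cosh^2(n\sigma/2m)=1+\sinh^2(n\sigma/2m)$ and $\cosh\sigma=1+2\sinh^2(\sigma/2)$.
\item It splits the result into a $\sigma$-free inequality, equivalent to $\tan^2(\theta/2n)\le\sin^2(\theta/2m)$, and the hyperbolic inequality $\cos(\theta/n)\sinh^2(n\sigma/2m)\ge\cos^2(\theta/2m)\sinh^2(\sigma/2)$.
\item The hyperbolic inequality follows from $\sinh(\kappa x)\ge\kappa\sinh x$ for $\kappa=n/m\ge1$, together with $n^2\cos(\theta/n)\ge m^2$ for $\abs{\theta}\le\pi$, $n\ge3$.
\end{itemize}

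If you want to keep your monotonicity idea, the right quantity is the ratio
\[
\Phi(\sigma)=\frac{1-\abs{r(z;-1,n)}^2}{1-\abs{r(z;\gamma_{\rm opt},m)}^2}
=\frac{\cos(\theta/n)}{\cos^2(\theta/2m)}\cdot\frac{1+\cosh(\kappa\sigma)}{\cosh\sigma+\cos(\theta/n)},
\]
which is even in $\sigma$. For $\sigma\ge0$, $\Phi'(\sigma)$ is a positive multiple of
\[
\bigl[\kappa\sinh(\kappa\sigma)\cosh\sigma-\cosh(\kappa\sigma)\sinh\sigma\bigr]+\bigl[\kappa\cos(\theta/n)\sinh(\kappa\sigma)-\sinh\sigma\bigr].
\]
The first bracket is at least $\sinh((\kappa-1)\sigma)\ge0$. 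The second is nonnegative by the same two estimates. Hence $\Phi(\sigma)\ge\Phi(0)\ge1$. The last step is again the $\tan^2\le\sin^2$ inequality, which still needs the quantitative proof described above.
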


While using $\gamma_{\rm opt}$ may appear to be attractive, there are two drawbacks. Firstly, by requiring the modulus, a Maclaurin series in this transformed variable would represent an analytic function in a series of non-analytic ones, potentially complicating further use of the approximations. Secondly, an analysis of the additional regular singular points that the rational map introduces into the transformed differential equation, similar to that of Jacobs, Piret, and Fornberg~\cite{Jacobs-Piret-Fornberg-1-26}, will reveal that $r(z;-1,m)$ provides global exponential convergence up to $m = 4$ while $r(z;\gamma_{\rm opt}, m)$ may only be used for the same purpose up to $m = 3$. Since, $\abs{r(z;-1,4)} \le \abs{r(z;\gamma_{\rm opt},3)}$, our focus for the remainder of the paper is on $r(z;-1,4)$, the conformal map with the smallest modulus.

\begin{figure}[ht]
\begin{center}
\includegraphics[width=\textwidth]{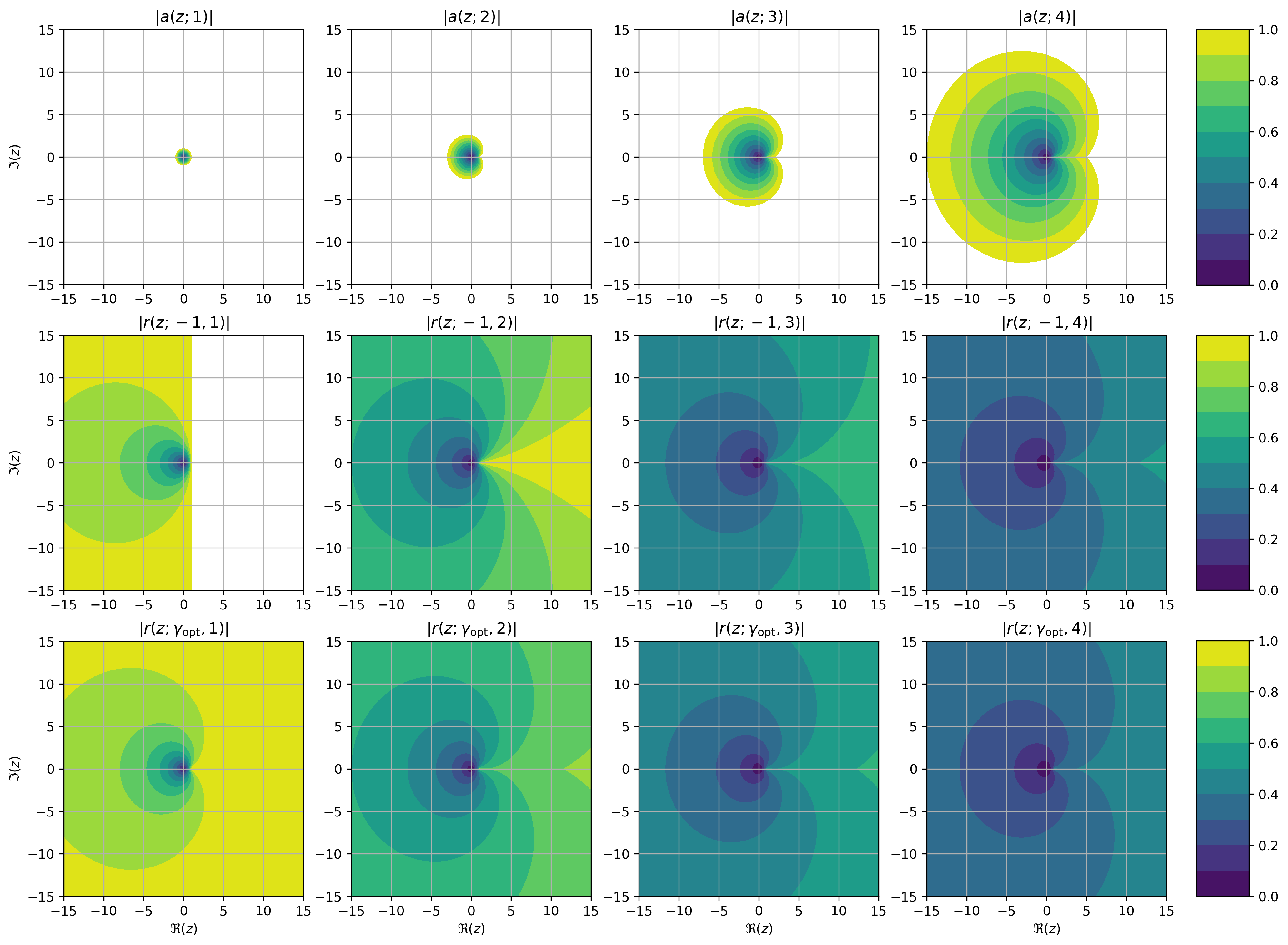}
\end{center}
\caption{Contour plot of moduli of the conformal maps $a(z;m)$, $r(z;-1,m)$ and $r(z;\gamma_{\rm opt},m)$ for $m=1,2,3,4$. Level sets range from $0$ to $1$ in steps of $1/10$, illustrating the images of concentric circles in the $w$-plane. While $r(z;\gamma_{\rm opt}, 4)$ is pointwise the smallest in modulus, its use has been ruled out in \S~\ref{section:rationalconformalmap}, so the best transformation is in fact $r(z;-1,4)$.}
\label{fig:contourf_conformalmaps}
\end{figure}

%%%%%%%%%%%%%%%%%%%%%%%%%%%%%%%%%%%%%%%%%%%%%%%%%%%%%%%%%%%%%%%%%%%%%%%%%%%%%%%%%%%%%%%%%%%%%%%%%%%%%%%%%%%%%%%%
%%%%%%%%%%%%%%%%%%%%%%%%%%%%%%%%%%%%%%%%%%%%%%%%%%%%%%%%%%%%%%%%%%%%%%%%%%%%%%%%%%%%%%%%%%%%%%%%%%%%%%%%%%%%%%%%
%%%%%%%%%%%%%%%%%%%%%%%%%%%%%%%%%%%%%%%%%%%%%%%%%%%%%%%%%%%%%%%%%%%%%%%%%%%%%%%%%%%%%%%%%%%%%%%%%%%%%%%%%%%%%%%%
%%%%%%%%%%%%%%%%%%%%%%%%%%%%%%%%%%%%%%%%%%%%%%%%%%%%%%%%%%%%%%%%%%%%%%%%%%%%%%%%%%%%%%%%%%%%%%%%%%%%%%%%%%%%%%%%

\subsection{The transformed generalized hypergeometric differential equation}

With the rational-algebraic conformal map in Eq.~\eqref{eq:RAT} with $\gamma=-1$, we now describe its transformation of the generalized hypergeometric differential equation. Letting:
\[
u(w):=
\pFq{q+1}{q}\left(
\begin{array}{c}
\alpha_1,\ldots,\alpha_{q+1}\\
\beta_1,\ldots,\beta_q
\end{array};
z=1-\left(\frac{1-w}{1+w}\right)^m
\right),
\]
we will find a recurrence relation for the coefficients in the Maclaurin series:
\[
u(w) = \sum_{n=0}^\infty u_n w^n.
\]
By the chain rule:
\[
\frac{\ud}{\ud z} = \frac{1}{2m}\frac{(1+w)^{m+1}}{(1-w)^{m-1}}\frac{\ud}{\ud w},
\]
and if:
\begin{equation}
p(w;m) = (1+w)^m - (1-w)^m,
\end{equation}
then:
\[
z\frac{\ud}{\ud z} = \left[1-\left(\frac{1-w}{1+w}\right)^m\right]\frac{1}{2m}\frac{(1+w)^{m+1}}{(1-w)^{m-1}}\frac{\ud}{\ud w} = \frac{p(w;m)}{2m}\frac{1+w}{(1-w)^{m-1}}\frac{\ud}{\ud w}.
\]
Hence, the differential equation \eqref{eq:pFqODE} for $\pFq{q+1}{q}$ in the transformed coordinates becomes:
\begin{align}
u(0) & = 1,\nonumber\\
\frac{(1+w)^{m+1}}{(1-w)^{m-1}} \frac{\ud}{\ud w} \prod_{i=1}^q\left[\frac{p(w;m)(1+w)}{(1-w)^{m-1}}\frac{\ud}{\ud w} + \tilde{\beta}_i\right] u(w) & = \prod_{i=1}^{q+1}\left[\frac{p(w;m)(1+w)}{(1-w)^{m-1}}\frac{\ud}{\ud w} + \tilde{\alpha}_i\right] u(w).\label{eq:RATODE}
\end{align}
Here, we have used $\tilde{\beta}_k = 2m(\beta_k-1)$ and $\tilde{\alpha}_k = 2m\alpha_k$ to keep the expression compact.

The Maclaurin series provides analytic continuation to the whole cut complex plane so long as any new singular points of the transformed differential equation are not inside the unit disk. Eq.~\eqref{eq:RATODE} has readily identifiable regular singular points at $w = \pm1$, and the remaining singular points come from an analysis of $p(w;m)$. It is confirmed that the roots of $p(w;m)$ include $w=0$ and are otherwise purely imaginary and are given by $\ii\tan(k\pi/m)$ for $k=-\lfloor\frac{m-1}{2}\rfloor,\ldots,\lfloor\frac{m-1}{2}\rfloor$. For $m=4$ and $k=1$, the smallest complex conjugate pair is at $\pm\ii\tan(\pi/4) = \pm\ii$. Thus, for $m>4$, the smallest conjugate pair will be inside the unit disk. As the roots of $p(w;m)$ are all distinct and also distinct from the other two at $\pm1$, we conclude that all singular points are regular. In summary, while the algebraic transformation of Jacobs, Piret, and Fornberg~\cite{Jacobs-Piret-Fornberg-1-26} has the practical restriction $1\le m\le 6$, the rational-algebraic transformation is practically restricted to $1\le m\le 4$.

For any other value of $\gamma$, including $\gamma_{\rm opt}$, in the transformation $w = r(z;\gamma,m)$, the associated polynomial $p(w;\gamma,m) = (1-\gamma w)^m-(1-w)^m$ has roots:
\[
w_{k,m} = \frac{1-e^{2\pi\ii k/m}}{1-\gamma e^{2\pi\ii k/m}},\quad{\rm for}\quad k = 0,\ldots,m-1.
\]
Since $\arg(\gamma_{\rm opt})\in (\pi-\frac{\pi}{m}, \pi+\frac{\pi}{m})$, the smallest root, in modulus, arises on the boundary of the admissible arc on the circle. In this case:
\begin{align*}
\abs{w_{k,2}}_{\gamma = e^{\ii(\pi\pm\frac{\pi}{2})}} & \ge 1,\quad{\rm for}\quad k=0,1,\\
\abs{w_{k,3}}_{\gamma = e^{\ii(\pi\pm\frac{\pi}{3})}} & \ge 1,\quad{\rm for}\quad k=0,1,2,
\end{align*}
while:
\[
\abs{w_{1,4}}_{\gamma = e^{\ii(\pi-\frac{\pi}{4})}} = \abs{\frac{1-\ii}{1-\ii e^{\ii(\pi-\frac{\pi}{4})}}} = \sqrt{\frac{2}{2+\sqrt{2}}} < 1.
\]
Thus the use of $\gamma_{\rm opt}$ is practically restricted to $1\le m\le 3$ while $\gamma=-1$ enjoys the faster global exponential decay provided by the transformation all the way up to $m=4$.

To return to a differential equation with polynomial coefficients, we multiply both sides of~\eqref{eq:RATODE} by $(1-w)^{(q+1)(m-1)+q}$ and use the following identities. 

Let $s = (k-1)(m-1)+j-1$, then
\begin{equation}\label{eq:key_identity_1}
(1-w)^{s+m}
\left[
\frac{p(w;m)(1+w)}{(1-w)^{m-1}}\frac{\ud}{\ud w}
+ \alpha
\right]
=
\left[
p(w;m)(1-w^2)\frac{\ud}{\ud w}
+ s\,p(w;m)(1+w)
+ \alpha(1-w)^m
\right]
(1-w)^s.
\end{equation}
Moreover, for any integer $k \geq 1$,
\begin{equation}\label{eq:key_identity_2}
(1-w)^k \frac{\ud}{\ud w}
=
\left[
(1-w)\frac{\ud}{\ud w} + k-1
\right]
(1-w)^{k-1}.
\end{equation}
Hence, equation~\eqref{eq:RATODE} can be written in the following form:
\begin{equation}\label{Eq: Lw=Rw}   
\mathcal{L}u(w)=\mathcal{R}u(w),
\end{equation}
where the operators $\mathcal{L}$ and $\mathcal{R}$ are defined by
\begin{align*}
\mathcal{L}& := (1-w)^{(q+1)(m-1)+q}\left[\frac{(1+w)^{m+1}}{(1-w)^{m-1}}\frac{\ud}{\ud w}\right]
\prod_{j=1}^{q}\left[\frac{p(w;m)(1+w)}{(1-w)^{m-1}}\frac{\ud}{\ud w}+\tilde{\beta}_j\right]\\
\mathcal{R}&  :=(1-w)^{(q+1)(m-1)+q}\prod_{j=1}^{q+1}\left[\frac{p(w;m)(1+w)}{(1-w)^{m-1}}\frac{\ud}{\ud w} + \tilde{\alpha}_j\right].
\end{align*}
Here, using the identitites~\eqref{eq:key_identity_1} and~\eqref{eq:key_identity_2}, these operators can be written as the products:
\begin{align*}
\mathcal{L} =\mathcal{L}_0\mathcal{L}_1\cdots\mathcal{L}_q,\quad \mathcal{R} &= \mathcal{R}_1\mathcal{R}_2\cdots\mathcal{R}_{q+1},
\end{align*}
with first-order linear differential operators with polynomial coefficients:
\begin{align*}
\mathcal{L}_0:=&\,(1-w^2)(1+w)^m\frac{\ud}{\ud w}
+
\big(qm-1\big)(1+w)^{m+1}, \\
\mathcal{L}_j:=& \,p(w;m)(1-w^2)\frac{\ud}{\ud w}
+
\big((q-j)m-1\big) p(w;m)(1+w)
+
\tilde{\beta}_j (1-w)^m, \quad 1\le j \le q-1, \\
\mathcal{L}_{q}:=&\,p(w;m)(1+w)\frac{\ud}{\ud w}+\tilde{\beta}_q(1-w)^{m-1},\\
\mathcal{R}_j:=&\, p(w;m)(1-w^2)\frac{\ud}{\ud w}+[(q-j+1)m-1](1+w)p(w;m)+\tilde{\alpha}_{j}(1-w)^m,  \quad 1\le j \le q, \\
\mathcal{R}_{q+1}:=&\,p(w;m)(1+w)\frac{\ud}{\ud w}+\tilde{\alpha}_{q+1}(1-w)^{m-1}.
\end{align*}
It is clear that the actions of ${\cal L}$ and ${\cal R}$ on Maclaurin series can be represented by infinite banded matrices $L$ and $R$ such that:
\begin{equation}\label{eq:Lu=Ru}
{\cal L} \begin{pmatrix} 1 & w & w^2 & \cdots\end{pmatrix}\begin{pmatrix} u_0\\u_1\\u_2\\\vdots\end{pmatrix} = \begin{pmatrix} 1 & w & w^2 & \cdots\end{pmatrix}L\bs{u} = \begin{pmatrix} 1 & w & w^2 & \cdots\end{pmatrix}R\bs{u} = {\cal R} \begin{pmatrix} 1 & w & w^2 & \cdots\end{pmatrix}\begin{pmatrix} u_0\\u_1\\u_2\\\vdots\end{pmatrix}.
\end{equation}
We ultimately approximate $u$ by its $n$-term Maclaurin truncation,
\[
u_n(w)
=
\sum_{k=0}^{n-1}u_kw^k,
\]
which can be realized by a judicious use of the canonical orthogonal projection:
\[
P_n=(I_n,\bs{0})\in\R^{n\times\infty}.
\]
In this case, a finite-dimensional linear system for the coefficients is obtained by truncating the linear system \eqref{eq:Lu=Ru}, and using the initial condition $u_0=1$:
\begin{equation}\label{eq. Linear system expanding}
\begin{pmatrix} 1 & 0 & \cdots & 0 \\ \multicolumn{4}{c}{\raisebox{0.5\normalbaselineskip}{$P_{n-1}(L-R)P_{n}^\top$}} \end{pmatrix} \begin{pmatrix} u_0\\ u_1\\ \vdots\\ u_{n-1} \end{pmatrix} = \begin{pmatrix} 1\\ 0\\ \vdots\\ 0 \end{pmatrix}. 
\end{equation}
The polynomial coefficients of the first-order factors $\mathcal{L}_j$ and $\mathcal{R}_j$ imply that their Maclaurin discretization matrices are banded. Indeed, if
\[
\mathcal{T}=
a(w)\frac{\ud}{\ud w}+b(w), \qquad
a(w)=\sum_{j=0}^{d_a}a_jw^j, \qquad
b(w)=\sum_{j=0}^{d_b}b_jw^j,
\]
then
\[
\mathcal{T}w^k=
k\sum_{j=0}^{d_a}a_jw^{k+j-1}+
\sum_{j=0}^{d_b}b_jw^{k+j}.
\]
Thus, the discretization matrix of ${\cal T}$ has at most one superdiagonal and at most $\max\{d_a-1,d_b\}$ subdiagonals, and if $a(0) = 0$, then the matrix is lower-triangular.

Since the polynomial coefficients of each first-order factor $\mathcal{L}_j$ and $\mathcal{R}_j$ have degree $\mathcal{O}(m)$, the corresponding discretization matrices have lower bandwidth $\mathcal{O}(m)$. Since only ${\cal L}_0$ has a leading coefficient that is nonzero at the origin, it is the only factor with a single super-diagonal. Moreover, since bandwidths add under matrix multiplication, both products of $q+1$ such factors has lower bandwidth $\mathcal{O}(qm)$. Then, the matrix $L-R$, and hence $P_{n-1}(L-R)P_n^\top$, has lower bandwidth $\mathcal{O}(qm)$ and upper bandwidth $1$. By vertical concatenation with the row representing the initial condition, the linear system in equation~\eqref{eq. Linear system expanding} is lower-triangular and can be solved by forward substitution in $\mathcal{O}(qmn)$ flops. However, explicitly assembling the banded matrices $L$ and $R$, by successive banded matrix multiplication requires $\mathcal{O}(q^2m^2n)$ flops and ${\cal O}(qmn)$ storage. This assembly cost motivates the approach developed in \S~\ref{section:compoundrecurrencerelation}, where the factored differential equation \eqref{Eq: Lw=Rw factored} is converted into a compound recurrence relation for the coefficients. This formulation preserves the factored structure and allows the coefficients to be computed in $\mathcal{O}(qmn)$ flops and ${\cal O}(qm)$ storage, without explicitly forming the truncated system.

%%%%%%%%%%%%%%%%%%%%%%%%%%%%%%%%%%%%%%%%%%%%%%%%%%%%%%%%%%%%%%%%%%%%%%%%%%%%%%%%%%%%%%%%%%%%%%%%%%%%%%%%%%%%%%%%
%%%%%%%%%%%%%%%%%%%%%%%%%%%%%%%%%%%%%%%%%%%%%%%%%%%%%%%%%%%%%%%%%%%%%%%%%%%%%%%%%%%%%%%%%%%%%%%%%%%%%%%%%%%%%%%%
%%%%%%%%%%%%%%%%%%%%%%%%%%%%%%%%%%%%%%%%%%%%%%%%%%%%%%%%%%%%%%%%%%%%%%%%%%%%%%%%%%%%%%%%%%%%%%%%%%%%%%%%%%%%%%%%
%%%%%%%%%%%%%%%%%%%%%%%%%%%%%%%%%%%%%%%%%%%%%%%%%%%%%%%%%%%%%%%%%%%%%%%%%%%%%%%%%%%%%%%%%%%%%%%%%%%%%%%%%%%%%%%%

\subsection{Compound recurrence relation}\label{section:compoundrecurrencerelation}

Recall the series expansion:
\[
u(w)=\sum_{n=0}^\infty u_nw^n,
\]
For a sequence $\{u_n\}$, the forward and backward shift operators are defined by
\[
\EE u_n = u_{n+1}, \qquad \EE^{-1} u_n = u_{n-1}.
\]
In particular,
\begin{equation}\label{eq. shift prop 1}
w^k u(w) \longleftrightarrow \EE^{-k}\, u_n,
\end{equation}
and
\begin{equation}\label{eq. shift prop 2}
\frac{\ud}{\ud w} u(w) \longleftrightarrow (n+1)\EE\, u_n.
\end{equation}
Now, recall the factorized differential equation:
\begin{equation}\label{Eq: Lw=Rw factored}   
\mathcal{L}_0\mathcal{L}_1\cdots\mathcal{L}_q u(w)=\mathcal{R}_1\mathcal{R}_2\cdots\mathcal{R}_{q+1} u(w).
\end{equation}
Using the relations~\eqref{eq. shift prop 1} and~\eqref{eq. shift prop 2}, for each factor in~\eqref{Eq: Lw=Rw factored}, yields a factorized recurrence relation of the form:
\begin{equation}\label{Eq: Recurrence L= R}
    L_0^{(m)}L_1^{(m)}\cdots L_q^{(m)}\, u_n
    =
    R_1^{(m)}R_2^{(m)}\cdots R_{q+1}^{(m)}\, u_n.
\end{equation}
The explicit formulae for the factors in \eqref{Eq: Recurrence L= R}, obtained after collecting like shifts, are given below. We adopt the convention that $\binom{m}{r}=0$ for $r<0$ or $r>m$, and define
\begin{equation*}
    \chi_o(k)=\frac{1-(-1)^k}{2}
    =
    \begin{cases}
        1, & \text{if } k \text{ is odd},\\
        0, & \text{if } k \text{ is even}.
    \end{cases}
\end{equation*}
The terms in the left-hand side are:
\begin{equation}\label{eq: def_L0}
L_0^{(m)}
    =
    \sum_{k=-1}^{m+1}
    \left[
        \left(\binom{m}{k+1}-\binom{m}{k-1}\right)(n-k)
        +(qm-1)\binom{m+1}{k}
    \right]\EE^{-k},
\end{equation}
for $j=1,\dots,q-1$,
\begin{equation*}
\begin{split}
L_j^{(m)}
    =&
    \sum_{k=0}^{m+1}
    \bigg[
        2\chi_o(k+1)\left(\binom{m}{k+1}-\binom{m}{k-1}\right)(n-k) \\
        &\quad + ((q-j)m-1)\left(2\chi_o(k)\binom{m}{k}-2\chi_o(k-1)\binom{m}{k-1}\right)
        + (-1)^k\tilde{\beta}_j\binom{m}{k}
    \bigg] \EE^{-k},
\end{split}
\end{equation*}
and
\begin{equation*}
    L_q^{(m)}
    =
    \sum_{k=0}^{m+1}
    \left[
        2\binom{m}{2\floor{\frac{k}{2}}+1}(n-k)
        +(-1)^k\tilde{\beta}_q\binom{m-1}{k}
    \right]\EE^{-k}.
\end{equation*}
In the right-hand side, for $j=1,\dots,q$,
\begin{equation*}
\begin{split}
R_j^{(m)}
    &= \sum_{k=0}^{m+1}
    \biggl[
        2\chi_o(k+1)\left(\binom{m}{k+1}-\binom{m}{k-1}\right)(n-k) \\
        &\qquad + ((q-j+1)m-1)\left(2\chi_o(k)\binom{m}{k}-2\chi_o(k-1)\binom{m}{k-1}\right)
        + (-1)^k\tilde{\alpha}_j\binom{m}{k}
    \biggr] \EE^{-k},
\end{split}
\end{equation*}
and
\begin{equation*}
    R_{q+1}^{(m)}
    =
    \sum_{k=0}^{m+1}
    \left[
        2\binom{m}{2\floor{\frac{k}{2}}+1}(n-k)
        +(-1)^k\tilde{\alpha}_{q+1}\binom{m-1}{k}
    \right]\EE^{-k}.
\end{equation*}
The recurrence relation in Eq.~\eqref{Eq: Recurrence L= R} is also valid with the initial conditions $u_0=1,\, u_{-1}=\cdots=u_{-m-1}=0$. Thus, given $u_n,u_{n-1},\ldots,u_{n-(m+1)}$, the recurrence can be used to compute the next coefficient $u_{n+1}$. This can be done while preserving the factorized structure of~\eqref{Eq: Recurrence L= R}, which avoids expanding it into a single long recurrence.

We introduce auxiliary sequences associated with the successive application of its factors. On the left-hand side, set
\begin{equation}\label{eq:left auxiliary sequences 0}
z_n^{(0)}=u_n,
\end{equation}
and, for \(r=1,\ldots,q\), define recursively
\begin{equation}\label{eq:left auxiliary sequences}
z_n^{(r)}=L_{q-r+1}^{(m)}z_n^{(r-1)}.
\end{equation}
Thus,
\[
z_n^{(1)}=L_q^{(m)}u_n,\quad
z_n^{(2)}=L_{q-1}^{(m)}L_q^{(m)}u_n,\quad\cdots\quad
z_n^{(q)}=L_1^{(m)}\cdots L_q^{(m)}u_n.
\]
Similarly, on the right-hand side, define
\begin{equation}\label{eq:right auxiliary sequences}
\tilde{z}_n^{(1)}= R_{q+1}^{(m)}u_n,
\end{equation}
and, for \(r=2,\ldots,q\),
\begin{equation}\label{eq:right auxiliary sequences r}
\tilde z_n^{(r)} = R_{q-r+2}^{(m)}\tilde z_n^{(r-1)}.
\end{equation}
Hence,
\[
\widetilde z_n^{(2)}=R_q^{(m)}R_{q+1}^{(m)}u_n,\quad\widetilde z_n^{(3)}=R_{q-1}^{(m)}R_q^{(m)}R_{q+1}^{(m)}u_n,\quad\cdots\quad
\widetilde z_n^{(q)}=R_2^{(m)}\cdots R_{q+1}^{(m)}u_n.
\]
With this notation, the full compound recurrence \eqref{Eq: Recurrence L= R} reduces to
\begin{equation}\label{eq:outer auxiliary recurrence}
L_0^{(m)}z_n^{(q)}= R_1^{(m)}\tilde z_n^{(q)}.
\end{equation}

This representation makes it possible to advance the recurrence one factor at a time. To see this, write
\[
L_0^{(m)}
=
\ell_{0,-1}^{(m)}(n)\EE
+
\sum_{k=0}^{m+1}\ell_{0,k}^{(m)}(n)\EE^{-k},
\]
and, for \(j\geq1\),
\[
L_j^{(m)} =
\sum_{k=0}^{m+1}\ell_{j,k}^{(m)}(n)\EE^{-k},
\qquad
R_j^{(m)} =
\sum_{k=0}^{m+1}r_{j,k}^{(m)}(n)\EE^{-k}.
\]
The factor \(L_0^{(m)}\) is distinguished from all the others by the
presence of the forward shift \(\EE\).  In fact, from its definition in~\eqref{eq: def_L0}, we have
\[
\ell_{0,-1}^{(m)}(n)=n+1.
\]
Hence, if the auxiliary sequences are known up to index $n$, Eq.~\eqref{eq:outer auxiliary recurrence} determines the next value $z_{n+1}^{(q)}$:
\begin{equation}\label{eq:update outer auxiliary}
z_{n+1}^{(q)}= \frac{1}{n+1}
\left[
\sum_{k=0}^{m+1}
r_{1,k}^{(m)}(n)\tilde z_{n-k}^{(q)}-\sum_{k=0}^{m+1}
\ell_{0,k}^{(m)}(n)z_{n-k}^{(q)}
\right].
\end{equation}
Once $z_{n+1}^{(q)}$ has been computed, we proceed backward through the factors on the left-hand side, using Eq.~\eqref{eq:left auxiliary sequences} to compute the remaining auxiliary quantities. For example,
\[
z_{n+1}^{(q)} = L_1^{(m)}z_{n+1}^{(q-1)}
\]
determines $z_{n+1}^{(q-1)}$, since this is the only unknown term in the right-hand side, as  all its preceding values are already available. The same procedure can then be applied to
\[
z_{n+1}^{(q-1)} = L_2^{(m)}z_{n+1}^{(q-2)},
\]
and continued successively through the remaining factors.  In general, if $j=q-r+1$, the relation
\[
z_{n+1}^{(r)}
=
\sum_{k=0}^{m+1}
\ell_{j,k}(n+1)z_{n+1-k}^{(r-1)}
\]
gives
\begin{equation}\label{eq:backward auxiliary update}
z_{n+1}^{(r-1)}=
\frac{z_{n+1}^{(r)}
- \displaystyle\sum_{k=1}^{m+1}
\ell_{j,k}^{(m)}(n+1)z_{n+1-k}^{(r-1)}
}{\ell_{j,0}^{(m)}(n+1)}.
\end{equation}
Proceeding from \(r=q\) down to \(r=1\) eventually yields
\[
z_{n+1}^{(0)}=u_{n+1}.
\]
After \(u_{n+1}\) is known, the auxiliary sequences on the right-hand side can be updated directly using Eqs.~\eqref{eq:right auxiliary sequences} and \eqref{eq:right auxiliary sequences r}, now proceeding in the opposite
direction.  First,
\[
\tilde z_{n+1}^{(1)} =
R_{q+1}^{(m)}u_{n+1},
\]
and then
\[
\tilde z_{n+1}^{(2)} = R_q^{(m)}\tilde z_{n+1}^{(1)},
\qquad\ldots,\qquad
\tilde z_{n+1}^{(q)} = R_2^{(m)}\tilde z_{n+1}^{(q-1)}.
\]
At this point, all auxiliary sequences have been updated to index $n+1$,completing one step of the recurrence and allowing the computation of the subsequent coefficients.

In summary, a complete step of the recurrence consists of the following three stages:
\begin{enumerate}
\item Use relation~\eqref{eq:outer auxiliary recurrence} to compute the next value of the outermost left auxiliary sequence $\{z_{n+1}^{(q)}\}$.

\item Work backwards through the left-hand factors using~\eqref{eq:left auxiliary sequences 0} and \eqref{eq:left auxiliary sequences} to successively compute the remaining left-hand auxiliary values. The final step yields $u_{n+1}$.

\item With the value $u_{n+1}$ in hand, use~\eqref{eq:right auxiliary sequences} and \eqref{eq:right auxiliary sequences r} to advance the right-hand auxiliary sequences.
\end{enumerate}

Since each factor involves only the shifts $1,\EE^{-1},\ldots,\EE^{-(m+1)}$, each update requires only a finite number of previously computed values of the corresponding auxiliary sequence. Thus, the recurrence can be advanced while storing only a fixed number of past values for each auxiliary sequence, regardless of how many coefficients have already been computed.

Therefore, using this procedure the products in \eqref{Eq: Recurrence L= R} need not be expanded into a single long recurrence. Instead, the individual factors are applied successively to the corresponding auxiliary sequences. Since each factor involves ${\cal O}(m)$ terms and there are ${\cal O}(q)$ factors, each new series coefficient can be computed in ${\cal O}(qm)$ flops. Consequently, the first $n$ coefficients can be generated in ${\cal O}(qmn)$ flops.  In comparison, the approach based on the infinite-dimensional matrix representation of the differential operators requires ${\cal O}(q^2m^2n)$ flops to assemble the coefficient matrix of the truncated linear system \eqref{eq. Linear system expanding} by successive banded matrix multiplications.

%%%%%%%%%%%%%%%%%%%%%%%%%%%%%%%%%%%%%%%%%%%%%%%%%%%%%%%%%%%%%%%%%%%%%%%%%%%%%%%%%%%%%%%%%%%%%%%%%%%%%%%%%%%%%%%%
%%%%%%%%%%%%%%%%%%%%%%%%%%%%%%%%%%%%%%%%%%%%%%%%%%%%%%%%%%%%%%%%%%%%%%%%%%%%%%%%%%%%%%%%%%%%%%%%%%%%%%%%%%%%%%%%
%%%%%%%%%%%%%%%%%%%%%%%%%%%%%%%%%%%%%%%%%%%%%%%%%%%%%%%%%%%%%%%%%%%%%%%%%%%%%%%%%%%%%%%%%%%%%%%%%%%%%%%%%%%%%%%%
%%%%%%%%%%%%%%%%%%%%%%%%%%%%%%%%%%%%%%%%%%%%%%%%%%%%%%%%%%%%%%%%%%%%%%%%%%%%%%%%%%%%%%%%%%%%%%%%%%%%%%%%%%%%%%%%

\section{An approximate Chebyshev series solution to the generalized hypergeometric differential equation}\label{section:Chebyshevseries}

Now that we have a globally convergent series for the generalized hypergeometric functions $\pFq{q+1}{q}$, it will be interesting to compare it to another method that converges globally, the ultraspherical spectral method~\cite{Olver-Townsend-55-462-13,Olver-Slevinsky-Townsend-29-573-20} to approximate the solution of the differential equation with a Chebyshev series. Note that this has been used before by Crespo et al.~\cite{Crespo-et-al-84-1-20} for the Gauss hypergeometric function to produce an analytic continuation on the extended real line by stitching together up to five series, one on the interval $[-1/2, 1/2]$ and up to two each on the regions $[1/2, 3/2]$ and $[-\infty,-1/2]\cup[3/2,\infty]$. For analytic continuation throughout the whole complex plane, Crespo et al.~\cite{Crespo-et-al-84-1-20} solve a Laplace problem on ellipses with boundary data determined by the values on the real line. The entire procedure produces a representation of a particular Gauss hypergeometric function on the whole complex plane, but requires a rather large storage scheme and precomputation time if one is just comparing numerical evaluation at a particular point.

Rather than solving for $\pFq{q+1}{q}$ globally on the real line by the analytic continuation of Crespo et al.~\cite{Crespo-et-al-84-1-20}, for $z\ne0$ the linear transformation $M(x) = z\dfrac{1+x}{2}$, $M : [-1,1]\to [0,z]$, views approximation of hypergeometric functions as evaluation of the Chebyshev series solution to:
\begin{align}
u(-1) & = 1,\nonumber\\
\frac{2}{z}\frac{\ud}{\ud x}\prod_{i=1}^q\left[(1+x)\frac{\ud}{\ud x}+\beta_i-1\right]u(x) &= \prod_{i=1}^{q+1}\left[(1+x)\frac{\ud}{\ud x}+\alpha_i\right]u(x),\label{eq:ChebyshevpFqODE}
\end{align}
at the right endpoint. That is, if:
\[
u(x) = \sum_{n=0}^\infty u_n T_n(x) = \begin{pmatrix} T_0(x) & T_1(x) & T_2(x) & \cdots \end{pmatrix}\begin{pmatrix} u_0\\ u_1\\ u_2\\ \vdots\end{pmatrix} = \bs{T}(x) \bs{u},
\]
is a Chebyshev series solution to the differential equation~\eqref{eq:ChebyshevpFqODE}, then:
\[
\pFq{q+1}{q}\left(\begin{array}{c} \alpha_1,\ldots,\alpha_{q+1}\\ \beta_1,\ldots,\beta_q\end{array}; z\right) = u(1) = \sum_{n=0}^\infty u_nT_n(1) = \sum_{n=0}^\infty u_n.
\]
The ultraspherical spectral method realizes the action of linear operators such as multiplication by $x$, differentiation and conversion between ultraspherical polynomials with integer parameter difference acting on the quasimatrices of Chebyshev polynomials as infinite banded matrices acting on the Chebyshev coefficients. Using results in Appendix~\ref{appendix:ultrasphericalspectralmethod}, the differential equation encodes an infinite-dimensional banded-plus-rank-one linear system for the Chebyshev coefficients:
\begin{align*}
\begin{pmatrix} 1 & -1 & 1 & -1 & \cdots\end{pmatrix}\bs{u} & = 1,\\
\bs{C}^{(q+1)}(x)\left\{\frac{2}{z}D_{(q)}^{(q+1)}\left[(I+X_{(q)})D_{(q-1)}^{(q)} + (\beta_q-1)R_{(q-1)}^{(q)}\right] \cdots \left[(I+X_{(1)})D_T^U + (\beta_1-1)R_T^U\right]\right. &\\
\left. -\left[(I+X_{(q+1)})D_{(q)}^{(q+1)} + \alpha_{q+1}R_{(q)}^{(q+1)}\right] \cdots \left[(I+X_{(1)})D_T^U + \alpha_1R_T^U\right] \right\}\bs{u} & = 0.
\end{align*}
An important property of the ultraspherical spectral method is that the coefficients it returns produce an approximation that is but a constant multiple worse than the true truncated Chebyshev series. More precisely, let $\bs{u}_n$ denote the infinite-dimensional vector whose first $n$ entries are the coefficients computed by the ultraspherical spectral method and whose remaining entries are zero. Using the Sobolev norm:
\[
\|\bs{u}\|_{\ell_\lambda^2}^2
=
\sum_{k=0}^{\infty}
|u_k|^2(k+1)^{2\lambda},
\]
Olver and Townsend~\cite[Theorem 4.5]{Olver-Townsend-55-462-13} show that there is a $C>0$ such that:
\[
\|\bs{u}-P_n^\top\bs{u}_n\|_{\ell_{q+1}^2}
\le
C\,\|\bs{u}-P_n^\top P_n\bs{u}\|_{\ell_{q+1}^2}
\to 0,\quad{\rm as}\quad n\to\infty.
\]

This differential equation uses $M$ to linearly map the domain of orthogonality of the Chebyshev polynomials to the line segment connecting the origin to the evaluation point $z\in\C\setminus[1,\infty)$. As it is known that $\pFq{q+1}{q}$ is analytic on this same cut plane and as the line segment never crosses the cut, we are approximating the Chebyshev series to an analytic function.
\begin{definition}
The Bernstein ellipse is given parametrically by:
\[
{\cal E}_\rho = \left\{ z\in\C: z = \frac{\rho e^{\ii\theta} + \rho^{-1} e^{-\ii\theta}}{2}, \theta\in[0,2\pi)\right\}.
\]
It has foci at $\pm1$ and the parameter $\rho\ge1$ is the sum of its semi-major and semi-minor axes.
\end{definition}

\begin{theorem}
Let a function $u$ analytic on $[-1,1]$ be analytically continuable to the open Bernstein ellipse ${\cal E}_\rho$, where it satisfies $\abs{u(x)} \le M$ for some $0<M<\infty$. Then its Chebyshev series satisfy:
\[
\norm{u-u_n}_{L^2_w(-1,1)} = \norm{u-u_n}_{\ell^2_0} \le \frac{2M \rho^{-n}}{\sqrt{\rho^2-1}}.
\]
\end{theorem}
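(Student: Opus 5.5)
The plan is the classical route through the coefficient bound $\abs{u_k}\le 2M\rho^{-k}$ combined with Parseval's identity in the Chebyshev-weighted space. Two preliminary points need fixing. First, $u_n = \sum_{k=0}^{n-1}u_kT_k$ is the $n$-term truncation, matching the paper's Maclaurin convention. Second, with $w(x)=(1-x^2)^{-1/2}$ we have $\norm{T_0}^2_{L^2_w}=\pi$ and $\norm{T_k}^2_{L^2_w}=\pi/2$ for $k\ge1$. The identification $\norm{\cdot}_{L^2_w(-1,1)}=\norm{\cdot}_{\ell^2_0}$ must therefore be read with the normalisation that makes the Chebyshev polynomials orthonormal up to the factor used in the paper. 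Concretely, I take $\norm{f}^2_{L^2_w} = \frac{2}{\pi}\int_{-1}^1 \abs{f}^2 w\,\ud x$ restricted to functions orthogonal to $T_0$, which is all we need for the tail $k\ge n\ge1$. Then
\[
\norm{u-u_n}^2_{L^2_w} = \sum_{k=n}^\infty \abs{u_k}^2 = \norm{u-u_n}^2_{\ell^2_0}.
\]
This gives the equality in the statement.

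Step 1 is the coefficient estimate. I substitute $x=\cos\theta$, so that $u_k = \frac{2}{\pi}\int_0^\pi u(\cos\theta)\cos k\theta\,\ud\theta$ for $k\ge1$. Then I pass to the Joukowski variable $x = (\zeta+\zeta^{-1})/2$, which maps the annulus $\rho^{-1}<\abs{\zeta}<\rho$ two-to-one onto the open ellipse ${\cal E}_\rho$. This writes
\[
u_k = \frac{1}{\pi \ii}\oint_{\abs{\zeta}=1} u\!\left(\tfrac{\zeta+\zeta^{-1}}{2}\right)\zeta^{-k-1}\ud\zeta .
\]
Because $u\circ$Joukowski is analytic in the annulus and bounded by $M$, I deform the contour to $\abs{\zeta}=s$ for any $s<\rho$. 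The ML-estimate gives $\abs{u_k}\le 2Ms^{-k}$, and letting $s\to\rho$ yields $\abs{u_k}\le 2M\rho^{-k}$. This is the standard argument of Trefethen's \emph{Approximation Theory and Approximation Practice}, Theorem 8.1. The only care needed is that the bound $M$ holds on the open ellipse, so the limit $s\to\rho$ is legitimate.

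Step 2 sums the geometric tail:
\[
\sum_{k=n}^\infty \abs{u_k}^2 \le 4M^2\sum_{k=n}^\infty \rho^{-2k} = \frac{4M^2\rho^{-2n}}{1-\rho^{-2}} = \frac{4M^2\rho^{-2n+2}}{\rho^2-1}.
\]

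The expected obstacle is the constant. A naive sum produces $2M\rho^{-n+1}/\sqrt{\rho^2-1}$, which is off by a factor of $\rho$ from the stated $2M\rho^{-n}/\sqrt{\rho^2-1}$. Two readings resolve this; I would adopt whichever matches the paper's convention and state it explicitly.

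\begin{itemize}
\item If $u_n$ denotes truncation through degree $n$, the tail starts at $k=n+1$. The sum is then $4M^2\rho^{-2n-2}/(1-\rho^{-2}) = 4M^2\rho^{-2n}/(\rho^2-1)$, giving exactly the stated bound.
\item If instead the $n$-term convention is kept, the claimed constant requires the sharper coefficient bound from Step 1 adapted to that indexing. Otherwise the statement holds with $\rho^{-n+1}$.
\end{itemize}

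I would take the second reading as the intended one only after verifying the indexing. Either way, the exponential rate $\rho^{-n}$ is unaffected.
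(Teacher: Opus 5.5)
Your argument is the paper's argument. You take the coefficient bound $\abs{u_k}\le 2M\rho^{-k}$ from Trefethen's Theorem 8.1 (re-deriving it via the Joukowski contour is fine) and sum the geometric tail. The paper's proof sums $\sum_{k=n+1}^\infty\rho^{-2k}$. That means it takes $u_n=\sum_{k=0}^{n}u_kT_k$, the truncation through degree $n$. This is exactly your first bullet, and it gives $2M\rho^{-n}/\sqrt{\rho^2-1}$ on the nose. Your remark about the weight normalisation ($\norm{T_0}^2=\pi$, $\norm{T_k}^2=\pi/2$) is a fair clarification that the paper leaves implicit.

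What you should fix is the hedging about indexing. You begin by fixing $u_n=\sum_{k=0}^{n-1}u_kT_k$, and you end by preferring the \emph{second reading}. Under that $n$-term convention the stated inequality is false in general. No sharper coefficient bound can rescue it, because $\abs{u_k}\le 2M\rho^{-k}$ is essentially attained. Take $u=T_n$:
\begin{itemize}
\item On the open ellipse $\abs{T_n}\le(\rho^n+\rho^{-n})/2=:M$.
\item The $n$-term truncation is $0$, so the error is $T_n$ itself, with $\ell^2_0$ norm $1$.
\item The claimed bound is $(1+\rho^{-2n})/\sqrt{\rho^2-1}$, which is strictly less than $1$ for, e.g., $\rho=2$.
\end{itemize}
So the degree-$n$ convention is forced here, even though it differs from the $n$-term Maclaurin convention used elsewhere in the paper. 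Commit to it and delete the second bullet.
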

\begin{proof}
By~\cite[Theorem 8.1]{Trefethen-12}, the Chebyshev coefficients satisfy $\abs{u_0}\le M$ and $\abs{u_k} \le 2M\rho^{-k}$ for $k>0$. Then:
\[
\norm{u-u_n}_{L^2_w(-1,1)} = \norm{u-u_n}_{\ell^2_0} \le 2M\sqrt{\sum_{k=n+1}^\infty\rho^{-2k}}.
\]
\end{proof}
Trefethen's~\cite[Theorem 8.2]{Trefethen-12} gives the corresponding exponential convergence in the uniform norm. Since $u$ is analytic, exponential convergence may be observed in any number of norms, including the Sobolev norms $\ell_\lambda^2$, with differences appearing in the constants rather than the exponential convergence rates.

For this problem, the left endpoint, $x=-1$, is the regular singular point corresponding to $z=0$ in the original equation. The other regular singular point is at $2 - z(1+x) = 0$, or $z(1+x) = 2$, or $x = 2/z-1$. Now, to determine the Bernstein ellipse parameter, $\rho$, the sum of semi-major and semi-minor axes is equal to the sum of the distances of the singular point to the foci, $x=\pm1$. Thus:
\[
\rho^2 - 2s\rho+1 = 0,
\]
where $s = |1/z| + |1/z-1|$.

\begin{theorem}\label{theorem:theoreticalconvergencerate}
The Chebyshev series solution converges at the same exponential rate as the Maclaurin series solution after Schwarz--Christoffel mapping with prevertices at $\pm1$.
\end{theorem}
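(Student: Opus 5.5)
The plan is to identify the Bernstein ellipse parameter $\rho$ of the Chebyshev problem directly with the modulus of the Schwarz--Christoffel variable. The map with prevertices at $\pm1$ is $w = r(z;-1,2) = (1-\sqrt{1-z})/(1+\sqrt{1-z})$. By Theorem~\ref{theorem:Inequalities} (or the biholomorphism from Eq.~\eqref{eq:SchwarzChristoffelSingleCut}), it satisfies $\abs{w}<1$ on $\C\setminus[1,\infty)$. Its Maclaurin coefficients decay like $\abs{w}^n$, since the transformed equation for $m=2$ has its singular points $0,\pm1$ and the root of $p(w;2)=4w$, so the radius of convergence is $1$. The Chebyshev coefficients decay like $\rho^{-n}$ by the preceding theorem. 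The statement therefore reduces to the identity $\rho = 1/\abs{r(z;-1,2)}$, where $\rho\ge1$ is the root of $\rho^2-2s\rho+1=0$ with $s=\abs{1/z}+\abs{1/z-1}$.

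First, invert the map. With $t=\sqrt{1-z}$ we have $t=(1-w)/(1+w)$, hence
\[
z = 1-t^2 = \frac{4w}{(1+w)^2},\qquad \frac1z = \frac{(1+w)^2}{4w},\qquad \frac1z-1 = \frac{(1-w)^2}{4w}.
\]
Second, compute $s$ using $\abs{1+w}^2+\abs{1-w}^2 = 2(1+\abs{w}^2)$:
\[
s = \frac{\abs{1+w}^2+\abs{1-w}^2}{4\abs{w}} = \frac12\left(\abs{w}+\abs{w}^{-1}\right).
\]
The two roots of $\rho^2-2s\rho+1=0$ are therefore exactly $\abs{w}$ and $\abs{w}^{-1}$. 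Since $\abs{w}<1$, the Bernstein parameter is $\rho=\abs{w}^{-1}$.

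As a consistency check, the singular point itself is $x=2/z-1=\tfrac12(w+w^{-1})$. This is the Joukowski image of $w^{-1}$, so it lies precisely on $\EE_{1/\abs{w}}$, confirming that this is the largest ellipse of analyticity. Combining with the Bernstein-ellipse error bound gives Chebyshev error $\OO(\abs{w}^{n})$ up to constants. The Maclaurin tail $\sum_{k\ge n}u_kw^k$ is likewise $\OO(\abs{w}^{n})$ up to subexponential factors, because the radius of convergence in $w$ is $1$. The two exponential rates coincide.

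The main obstacle is rigor at the boundary of analyticity. The function $u$ is only analytic inside the open ellipse $\EE_\rho$, with a regular singularity on it, so it is not bounded on $\EE_\rho$. I would apply the Bernstein theorem on $\EE_{\rho-\epsilon}$ for every $\epsilon>0$ and conclude equality of rates in the sense $\limsup \abs{u_n}^{1/n}$. The Maclaurin side is handled the same way, by Cauchy estimates on $\abs{w}=1-\epsilon$. The degenerate case $z=0$ is trivial and should be excluded.
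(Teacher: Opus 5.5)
Your proof is correct, and it reduces to the same identity as the paper's, $\rho = 1/\abs{r(z;-1,2)}$, but it verifies that identity by a different computation.

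The paper works entirely in the $z$-variable. It writes $\abs{w}^{-1} = \abs{1+\sqrt{1-z}}^2/\abs{z}$ and expands $\rho = s+\sqrt{s^2-1}$ as $\abs{z}^{-1}\bigl[1+\abs{1-z}+\sqrt{(1-z)+\overline{(1-z)}+2\abs{1-z}}\bigr]$. It then matches the two using $\sqrt{\zeta}+\overline{\sqrt{\zeta}} = \sqrt{\zeta+\bar\zeta+2\abs{\zeta}}$, an identity that tacitly relies on the principal branch satisfying $\Re\sqrt{\zeta}\ge 0$.

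You instead pull back to the $w$-plane through $z = 4w/(1+w)^2$. There the parallelogram law gives $s=\frac12(\abs{w}+\abs{w}^{-1})$ at once, so the roots of the quadratic are visibly $\abs{w}^{\pm1}$. The only branch information you need is $\abs{w}<1$.

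Your "consistency check" is really the conceptual core. The relation $2/z = 1+\frac12(w+w^{-1})$ says the Schwarz--Christoffel map is the Joukowski map followed by the M\"obius map $x\mapsto 2/(1+x)$. Since Bernstein ellipses are Joukowski images of circles, this explains why the two rates must coincide, rather than merely confirming that they do. The paper's route is a direct check starting from the formula for $s$; yours is shorter and more explanatory.

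Your last paragraph addresses what ``rate'' means, which the paper leaves implicit. Two refinements apply there:
\begin{itemize}
\item The $\epsilon$-shrinking argument only gives matching upper bounds, namely that both rates are at most $\abs{w}$. Equality of the actual $\limsup$'s needs $z=1$ to be a genuine singularity of the principal branch, which holds unless the series terminates.
\item Analyticity of $u\circ M$ on the whole open ellipse follows because its affine image in the $z$-plane is convex, contains $0$, and has $1$ on its boundary. It is therefore disjoint from $[1,\infty)$.
\end{itemize}
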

\begin{proof}
We need to show that $\rho = \abs{w}^{-1} = \abs{r(z;-1,2)}^{-1}$. Since:
\[
\abs{w}^{-1} = \abs{\frac{1+\sqrt{1-z}}{1-\sqrt{1-z}}} = \frac{\abs{(1+\sqrt{1-z})^2}}{\abs{z}} = \frac{1+\abs{1-z} + \sqrt{1-z} + \overline{\sqrt{1-z}}}{\abs{z}},
\]
and since:
\begin{align*}
\rho & = \abs{\frac{1}{z}}+\abs{\frac{1}{z}-1}+\sqrt{\left(\abs{\frac{1}{z}}+\abs{\frac{1}{z}-1}\right)^2-1},\\
& = \frac{1}{\abs{z}}\left[1+\abs{1-z} + \sqrt{(1+\abs{1-z})^2-\abs{z}^2}\right],\\
& = \frac{1}{\abs{z}}\left[1+\abs{1-z} + \sqrt{1-z+\overline{1-z}+2\abs{1-z}}\right],\\
\end{align*}
The claim now follows from the well-known identity:
\[
\sqrt{z}+\overline{\sqrt{z}} = \sqrt{z+\bar{z}+2\abs{z}}.
\]
\end{proof}

We can also analyze the convergence rate of the Chebyshev series solution to the differential equation in the transformed variable $w = r(z;-1,2)$. We use the linear map $M_w:[-1,1]\to [0,w]$, $M_w(x) = w\frac{1+x}{2}$. In this case, the transformed differential equation has two singular points at $w = \pm 1$. Therefore, the Bernstein ellipse parameter is
\[
\rho_w = \min \{\rho_{-1}, \rho_{1}\},
\]
where
\[
\rho_{\pm 1}
=
\left|\frac{1}{w}\right|
+
\left|\frac{1}{w} \pm 1\right|
+
\sqrt{
\left(
\left|\frac{1}{w}\right|
+
\left|\frac{1}{w} \pm 1\right|
\right)^2
-1
}.
\]

Factoring out $\left|\frac{1}{w}\right|$, we obtain
\[
\rho_{\pm 1}
=
\left|\frac{1}{w}\right|
\left(
1
+
|1 \pm w|
+
\sqrt{
\left(1 + |1 \pm w|\right)^2
- |w|^2
}
\right).
\]
This implies that
\[
\left|\frac{1}{w}\right|
\le
\rho_w
\le
4\left|\frac{1}{w}\right|.
\]
The smallest values of $\rho_w$ occur when $w$ approaches the singular points $\pm 1$, while the largest values are obtained when $w$ is close to $0$. Comparing this with Theorem~\ref{theorem:theoreticalconvergencerate}, this estimate indicates that for $w$ near the origin, the Chebyshev series in $w$ converges at most four times faster than the corresponding Chebyshev expansion in the original variable $z$ or the Maclaurin series obtained via the rational transformation.

While Chebyshev series remain a gold standard in approximation theory, the solution of this differential equation is a few orders of magnitude slower than the method we proposed above with the compound recurrence relation for the factored differential equation. Since the linear system is not lower-triangular, we have not discovered an algorithm requiring only ${\cal O}(qm)$ instead of ${\cal O}(qmn)$ storage. On the other hand, that the exponential convergence rate of the Chebyshev is exactly equal to the quadratic-fractional transformation is somewhat surprising and also shows that the rational conformal maps with $m = 3$ and $4$ can even outperform it.

%%%%%%%%%%%%%%%%%%%%%%%%%%%%%%%%%%%%%%%%%%%%%%%%%%%%%%%%%%%%%%%%%%%%%%%%%%%%%%%%%%%%%%%%%%%%%%%%%%%%%%%%%%%%%%%%
%%%%%%%%%%%%%%%%%%%%%%%%%%%%%%%%%%%%%%%%%%%%%%%%%%%%%%%%%%%%%%%%%%%%%%%%%%%%%%%%%%%%%%%%%%%%%%%%%%%%%%%%%%%%%%%%
%%%%%%%%%%%%%%%%%%%%%%%%%%%%%%%%%%%%%%%%%%%%%%%%%%%%%%%%%%%%%%%%%%%%%%%%%%%%%%%%%%%%%%%%%%%%%%%%%%%%%%%%%%%%%%%%
%%%%%%%%%%%%%%%%%%%%%%%%%%%%%%%%%%%%%%%%%%%%%%%%%%%%%%%%%%%%%%%%%%%%%%%%%%%%%%%%%%%%%%%%%%%%%%%%%%%%%%%%%%%%%%%%

\section{A generalized Euler transformation}

The Euler transformation:
\begin{equation}\label{eq:Eulertransformation}
\pFq{2}{1}\left(\begin{array}{c}\alpha_1, \alpha_2\\\beta_1\end{array};z\right) = (1-z)^{\beta_1-\alpha_1-\alpha_2} \pFq{2}{1}\left(\begin{array}{c}\beta_1-\alpha_1, \beta_1-\alpha_2\\\beta_1\end{array};z\right),
\end{equation}
is an under-appreciated transformation that improves the utility of Maclaurin series for the Gauss hypergeometric function. If $\beta_1-\alpha_1-\alpha_2 < 0$, then it is known that the Gauss hypergeometric function is singular at $z=1$. Even in the neighbourhood of this point, a Maclaurin series attempts to approximate a function that is unbounded as $z\to1$ with bounded basis functions, $z^n$. Thus, the only way this is possible is if the coefficients exhibit large algebraic growth. By explicitly factoring the singular part of the function, Eq.~\eqref{eq:Eulertransformation} transforms the series calculation to one with algebraically {\em decaying} coefficients. Conversely, if $\beta_1-\alpha_1-\alpha_2 > 0$, then the Gauss hypergeometric function gets arbitrarily small as $z\to1$. Using basis functions centred at the origin leads to subtractive cancellation in the neighbourhood of $z=1$.

Certain generalizations of the Euler transformation for $\pFq{q+1}{q}$ exist~\cite{Maier-358-39-06,Miller-Paris-62-31-11,Miller-Paris-43-291-13}, though they impose severe parameter restrictions to obtain {\em another} generalized hypergeometric on the right-hand side. In solving the ODE numerically, we need only let $u(z) = (1-z)^\delta v(z)$, where $\delta = \sum_{i}\beta_i -\sum_i\alpha_i$ quantifies the imbalance in parameters, and solve the resulting differential equation for $v(z)$ with the same conformal map of \S~\ref{section:rationalconformalmap}. The resulting differential equation is:
\begin{align}\label{eq:pFqEulerODE}
v(0) = 1, &\nonumber\\
\left[(1-z)\frac{\ud}{\ud z}-(\delta-q)\right]\prod_{i=1}^q\left[(z-z^2)\frac{\ud}{\ud z} + \beta_i - 1-(\beta_i-1+\delta-q+i)z\right] v(z) &\\
 = \prod_{i=1}^{q+1}\left[(z-z^2)\frac{\ud}{\ud z} + \alpha_i-(\alpha_i+\delta+i-1-q)z\right] v(z).&\nonumber
\end{align}
Under the map \eqref{eq:RAT}, and with:
\[
\begin{array}{rcl}
\tilde{\alpha}_k &=& 2m\,\alpha_k, \\[4pt]
\hat{\alpha}_k   &=& 2m\bigl(\alpha_k + \delta - q + k - 1\bigr),
\end{array}
\qquad\qquad
\begin{array}{rcl}
\tilde{\beta}_k &=& 2m(\beta_k - 1), \\[4pt]
\hat{\beta}_k   &=& 2m\bigl(\beta_k - 1 + \delta - q + k\bigr),
\end{array}
\]
the differential equation \eqref{eq:pFqEulerODE} becomes:
\begin{equation}\label{Eq. Euler 1}
\begin{split}
\left[(1-w^2)\frac{\ud}{\ud w}
-2m(\delta-q)
\right]
\prod_{j=1}^q
\left[
\frac{p(w;m)(1-w^2)}
     {(1+w)^m}
\frac{\ud}{\ud w}
+\tilde{\beta}_j
-\hat{\beta}_j
\frac{p(w;m)}{(1+w)^m}
\right]v(w)&\\
=\prod_{j=1}^{q+1}
\left[ \frac{p(w;m)(1-w^2)}
     {(1+w)^m}
\frac{\ud}{\ud w}
+\tilde{\alpha}_j
-\hat{\alpha}_j
\frac{p(w;m)}{(1+w)^m}
\right]v(w).&
\end{split} 
\end{equation}
To obtain a differential equation with polynomial coefficients, in this case we multiply both sides of \eqref{Eq. Euler 1} by \((1+w)^{(q+1)m}\), and use Eq.~\eqref{eq:key_identity_2} and the following modified version of Eq.~\eqref{eq:key_identity_1}:
\begingroup
\small
\begin{equation*}\label{eq:key_identity_3}
(1+w)^{s+m}
\left[
\frac{p(w;m)(1-w^2)}{(1+w)^m}
\frac{\ud}{\ud w}
+\alpha
-\beta\frac{p(w;m)}{(1+w)^m}
\right]
=
\left[
p(w;m)(1-w^2)\frac{\ud}{\ud w}
-p(w;m)\bigl(s(1-w)+\beta\bigr)
+\alpha(1+w)^m
\right]
(1+w)^s.
\end{equation*}
\endgroup
This allows us to write Eq.~\eqref{Eq. Euler 1} as
\begin{equation}\label{Eq: Lw=Rw tilde}   
\widetilde{\mathcal L}\,v(w)=\widetilde{\mathcal{L}}_0 \widetilde{\mathcal{L}}_1\cdots \widetilde{\mathcal{L}}_q \,v(w)=\widetilde{\mathcal{R}}_1 \widetilde{\mathcal{R}}_2 \cdots \widetilde{\mathcal{R}}_{q+1}\, v(w)=\widetilde{\mathcal R}\,v(w).
\end{equation}
where the operators $\widetilde{\mathcal L}$ and $\widetilde{\mathcal R}$ have the following factored form:
\begin{align*}%\label{eq:polynomial_L Euler}
\widetilde{\mathcal L}
&:=
(1+w)^{(q+1)m}
\left[
(1-w^2)\frac{\ud}{\ud w}
-2m(\delta-q)
\right]
\prod_{j=1}^{q}
\left[
\frac{p(w;m)(1-w^2)}
     {(1+w)^m}
\frac{\ud}{\ud w}
+\widetilde{\beta}_j
-\widehat{\beta}_j
\frac{p(w;m)}{(1+w)^m}
\right]
\nonumber\\
&=
(1+w)^m\left[
(1-w^2)\frac{\ud}{\ud w}
-qm(1-w)
-2m(\delta-q)
\right]
\nonumber\\
&\qquad\times
\prod_{j=1}^{q}
\left[
p(w;m)(1-w^2)\frac{\ud}{\ud w}
-p(w;m)
\left(
(q-j)m(1-w)
+\widehat{\beta}_j
\right)
+\widetilde{\beta}_j(1+w)^m
\right]
\end{align*}
and 
\begin{align*}%\label{eq:polynomial_R Euler}
\widetilde{\mathcal R}
&:=
(1+w)^{(q+1)m}
\prod_{j=1}^{q+1}
\left[
\frac{p(w;m)(1-w^2)}
     {(1+w)^m}
\frac{\ud}{\ud w}
+\widetilde{\alpha}_j
-\widehat{\alpha}_j
\frac{p(w;m)}{(1+w)^m}
\right]
\nonumber\\
&=
\prod_{j=1}^{q+1}
\left[
p(w;m)(1-w^2)\frac{\ud}{\ud w}
-p(w;m)
\left(
(q+1-j)m(1-w)
+\widehat{\alpha}_j
\right)
+\widetilde{\alpha}_j(1+w)^m
\right].
\end{align*}
As in Section~\ref{section:compoundrecurrencerelation}, we can represent the differential equation in terms of shift operators, so that Eq.~\eqref{Eq: Lw=Rw tilde} can be rewritten as a factorized recurrence relation of the form:
\begin{equation}\label{Eq: Recurrence L= R Euler}
    \widetilde{L}_0^{(m)}
    \widetilde{L}_1^{(m)}
    \cdots
    \widetilde{L}_{q}^{(m)}\, v_n
    =
    \widetilde{R}_1^{(m)}
    \widetilde{R}_2^{(m)}
    \cdots
    \widetilde{R}_{q+1}^{(n)}\, v_n.
\end{equation}
Here, in the left-hand side,
\begin{equation*}
    \widetilde{L}_{0}^{(m)}    =
    \sum_{k=-1}^{m+2}
    \left[\left(\binom{m}{k+1}-\binom{m}{k-1}\right)(n-k)
    +qm\binom{m}{k-1}
    +m(q-2\delta)\binom{m}{k}
    \right]\EE^{-k},
\end{equation*}
and for $j=1,\dots,q$,
\begin{equation*}
\begin{split}
\widetilde{L}_j^{(m)}
    =&
    \sum_{k=0}^{m+1}
    \bigg[
        2\chi_o(k+1)
        \left(
            \binom{m}{k+1}
            -
            \binom{m}{k-1}
        \right)(n-k)
        \\
        &\qquad  -2\chi_o(k)\binom{m}{k}[(q-j)m+\widehat{\beta}_j]
            +
            2\chi_o(k-1)\binom{m}{k-1}(q-j)m +\widetilde{\beta}_j\binom{m}{k}
    \bigg]\EE^{-k}.
\end{split}
\end{equation*}
On the right-hand side, for $j=1,\dots,q+1$.
\begin{equation*}
\begin{split}
\widetilde{R}_j^{(m)}
    =&
    \sum_{k=0}^{m+1}
    \bigg[
        2\chi_o(k+1)
        \left(
            \binom{m}{k+1}
            -
            \binom{m}{k-1}
        \right)(n-k)
        \\
        &\qquad    -2\chi_o(k)\binom{m}{k}[(q+1-j)m+\widehat{\alpha}_j]   +
            2\chi_o(k-1)\binom{m}{k-1}(q+1-j)m
       +\widetilde{\alpha}_j\binom{m}{k}
    \bigg]\EE^{-k}.
\end{split}
\end{equation*}
Therefore, the generalized Euler transformation preserves the factorized structure required by the recurrence construction of \S~\ref{section:compoundrecurrencerelation}, allowing the coefficients of $v(w)$ to be computed using the same auxiliary-sequence procedure obviating the expansion of the products in \eqref{Eq: Recurrence L= R Euler} into a single long recurrence. Once $v$ has been computed, the original solution is recovered from
\[
u(z)=(1-z)^\delta v(w).
\]
In particular, the transformation retains the $\mathcal O(qmn)$ complexity of the factorized recurrence while removing the algebraic singular behaviour associated with the factor $(1-z)^{\delta}$.

%%%%%%%%%%%%%%%%%%%%%%%%%%%%%%%%%%%%%%%%%%%%%%%%%%%%%%%%%%%%%%%%%%%%%%%%%%%%%%%%%%%%%%%%%%%%%%%%%%%%%%%%%%%%%%%%
%%%%%%%%%%%%%%%%%%%%%%%%%%%%%%%%%%%%%%%%%%%%%%%%%%%%%%%%%%%%%%%%%%%%%%%%%%%%%%%%%%%%%%%%%%%%%%%%%%%%%%%%%%%%%%%%
%%%%%%%%%%%%%%%%%%%%%%%%%%%%%%%%%%%%%%%%%%%%%%%%%%%%%%%%%%%%%%%%%%%%%%%%%%%%%%%%%%%%%%%%%%%%%%%%%%%%%%%%%%%%%%%%
%%%%%%%%%%%%%%%%%%%%%%%%%%%%%%%%%%%%%%%%%%%%%%%%%%%%%%%%%%%%%%%%%%%%%%%%%%%%%%%%%%%%%%%%%%%%%%%%%%%%%%%%%%%%%%%%

\section{Numerical results}

Our numerical experiments are conducted on an iMac Pro (Early 2018) with a $2.3$ GHz Intel Xeon W-2191B with $128$ GB $2.67$ GHz DDR4 RAM. The code is available in Julia in {\tt HypergeometricFunctions.jl}~\cite{Slevinsky-GitHub-HypergeometricFunctionsjl}.

The left panel of Figure~\ref{fig:convergenceandtimes} illustrates the exponential decay rates in successive terms in the approximation of $\pFq{2}{1}((1+\ii, 2-\ii), (3,); e^{\ii \pi/3})$ with the polynomial conformal maps of Jacobs, Piret, and Fornberg~\cite{Jacobs-Piret-Fornberg-1-26}, the rational conformal map of this work described in \S~\ref{section:rationalconformalmap}, and the approximate Chebyshev series returned by square truncations of the ultraspherical spectral method, as described in \S~\ref{section:Chebyshevseries}. Here, we have numerical evidence confirming Theorems~\ref{theorem:Inequalities} and~\ref{theorem:theoreticalconvergencerate}, that: the rational conformal maps converge faster than the corresponding polynomial conformal maps of the same degree, $\abs{r(z;-1,m)} \le \abs{a(z;m)}$; the Schwarz--Christoffel map converges at the same rate as the approximate Chebyshev series; and, that $m=4$ is optimal in the rational conformal map. The right panel of Figure~\ref{fig:convergenceandtimes} illustrates the calculation times of the linear recurrence for computing the Maclaurin series after the rational conformal transformation with $m=4$ averaged over 100 trials. As described in \S~\ref{section:compoundrecurrencerelation}, we have derived a method to leverage the factored differential equation to produce approximations in ${\cal O}(qmn)$ flops. With axes on logarithmic scales, this is evidenced by a doubling, rather than quadrupling, in the calculation time as the number of denominator parameters doubles. The implementation of $\pFq{2}{1}$ is substantially faster than the others because it is a special case for which we optimized the procedure even further. The parameters were chosen as uniformly distributed pseudorandom numbers in $[1,2)$, and the evaluation point is $z=\frac{1}{2}$. The bump in the calculation times around $n = 300$ is due to the fact that $r(\frac{1}{2};-1,4)^{300} \approx 9.8897\times10^{-320}$, and the slowdown is attributable to the use of subnormal numbers. For $n=305$ and beyond, the corresponding value is $0$ in double precision. While choosing a different evaluation point could shift the bump in calculation times into a different decade, it is inevitable for an exponentially convergent algorithm. Nevertheless, it does not affect the asymptotic linear complexity of the approach.

\begin{figure}[htbp]
\begin{center}
\begin{tabular}{cc}
\includegraphics[width=0.486\textwidth]{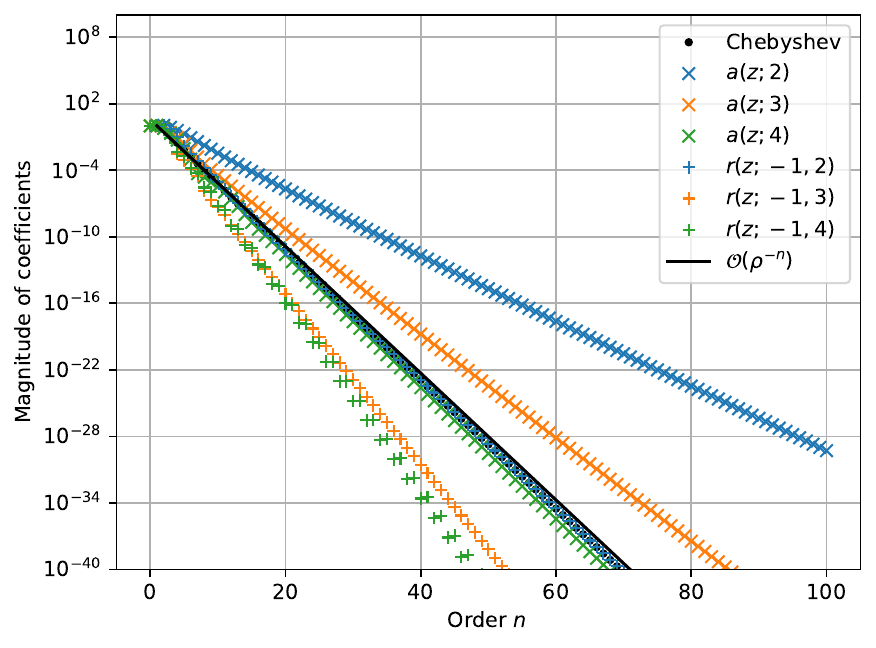} &
\includegraphics[width=0.48\textwidth]{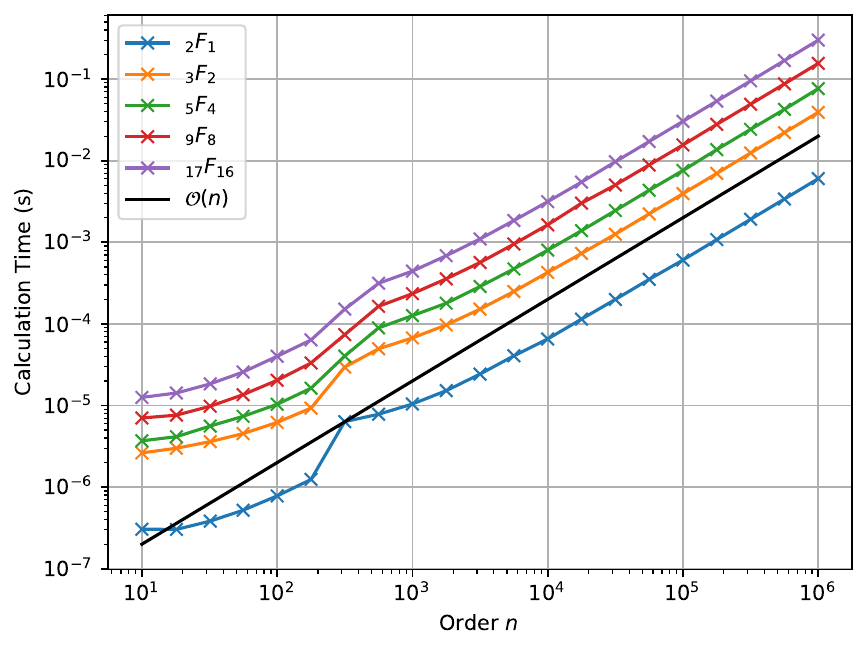}\\
\end{tabular}
\end{center}
\caption{The left panel illustrates the various exponential convergence rates of the method of Jacobs, Piret, and Fornberg~\cite{Jacobs-Piret-Fornberg-1-26}, that of this work described in \S~\ref{section:rationalconformalmap}, and the ultraspherical spectral method, as described in \S~\ref{section:Chebyshevseries}. The right panel illustrates the ${\cal O}(qmn)$ complexity by running the compound linear recurrence relation to a predetermined order $n$.}
\label{fig:convergenceandtimes}
\end{figure}

Figures~\ref{fig:2F1polm2}--\ref{fig:2F1ratm4} illustrate the approximation of the Gauss hypergeometric function $\pFq{2}{1}((1, 7/2), (-7/4,), z)$ on a square subset of the complex plane. The top two panels of each figure are phase portraits, while the bottom two panels are contour plots of the modulus of the relative error. The left panels of each figure illustrate approximation by the $30$-term truncations while the right panels of each figure illustrate approximation until the following stopping criterion:
\[
\abs{u_k(w(z)) - u_{k-1}(w(z))} \le {\tt tol}\abs{u_k(w(z))},
\]
is reached, where ${\tt tol} = 8\epsilon_{\rm mach} = 2^{-49}$. In the bottom right panel of Figures~\ref{fig:2F1polm2}--\ref{fig:2F1polm4}, the level sets demarcating the enlarged theoretical convergence regions are illustrated by black curves. Curiously, in Figure~\ref{fig:2F1polm4} where $m=4$, the Maclaurin series in this polynomial conformal map converges far beyond the theoretical extremal level sets, supporting the observation that the series are also asymptotic (divergent) series beyond these sets. Through these figures, it is clear that the rational transformations are more powerful tools of analytic continuation, and will be even more useful beyond the Gauss hypergeometrics as the analytical rational transformations of \S~\ref{subsection:analyticcontinuationbyrationaltransformations} are more complicated to implement.

\begin{figure}[htbp]
\begin{center}
\includegraphics[width=\textwidth]{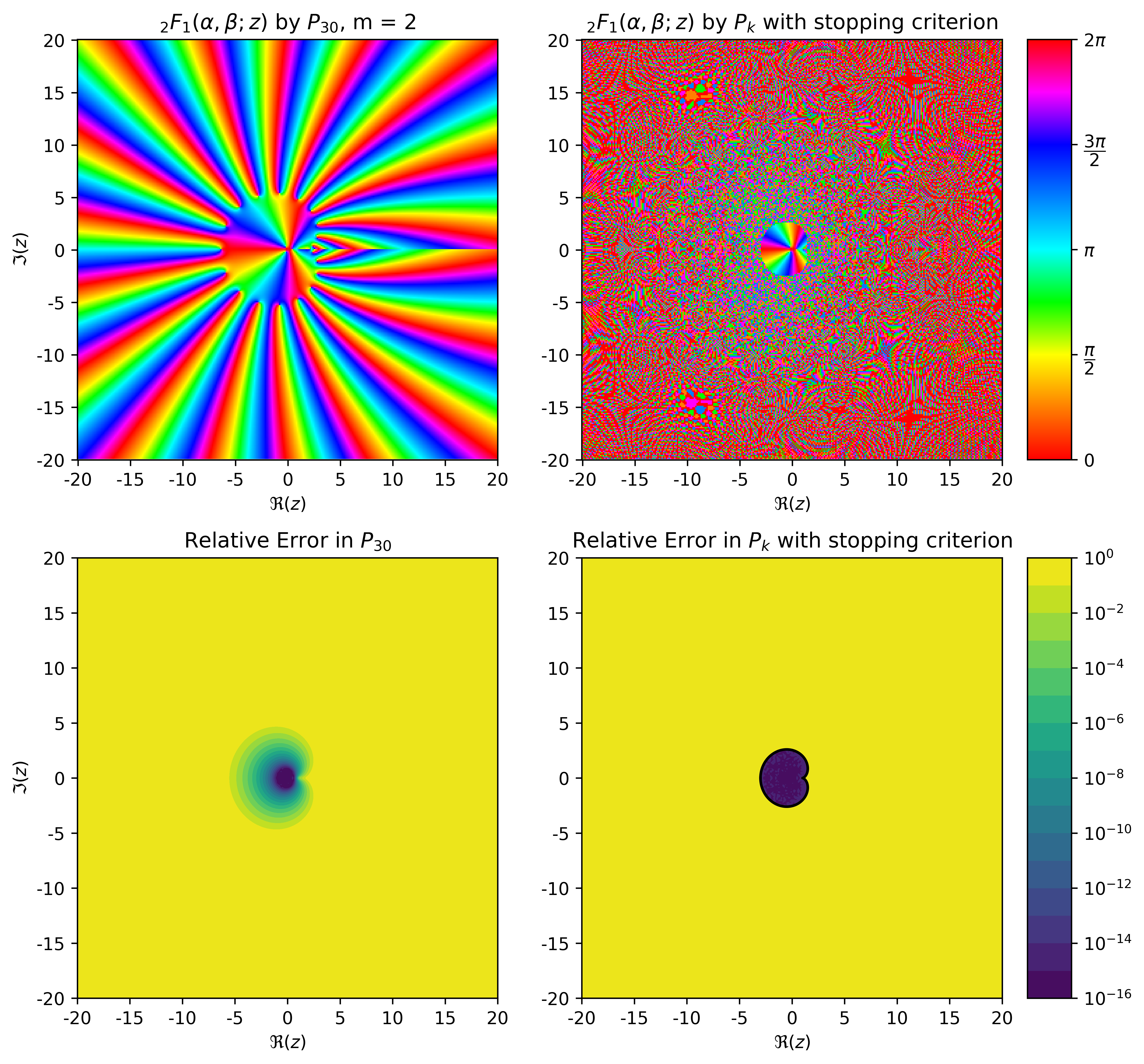}
\end{center}
\caption{Approximation of $\pFq{2}{1}((1, 7/2), (-7/4,), z)$ by Maclaurin series after the polynomial map of Jacobs, Piret, and Fornberg~\cite{Jacobs-Piret-Fornberg-1-26} with $m=2$.}
\label{fig:2F1polm2}
\end{figure}

\begin{figure}[htbp]
\begin{center}
\includegraphics[width=\textwidth]{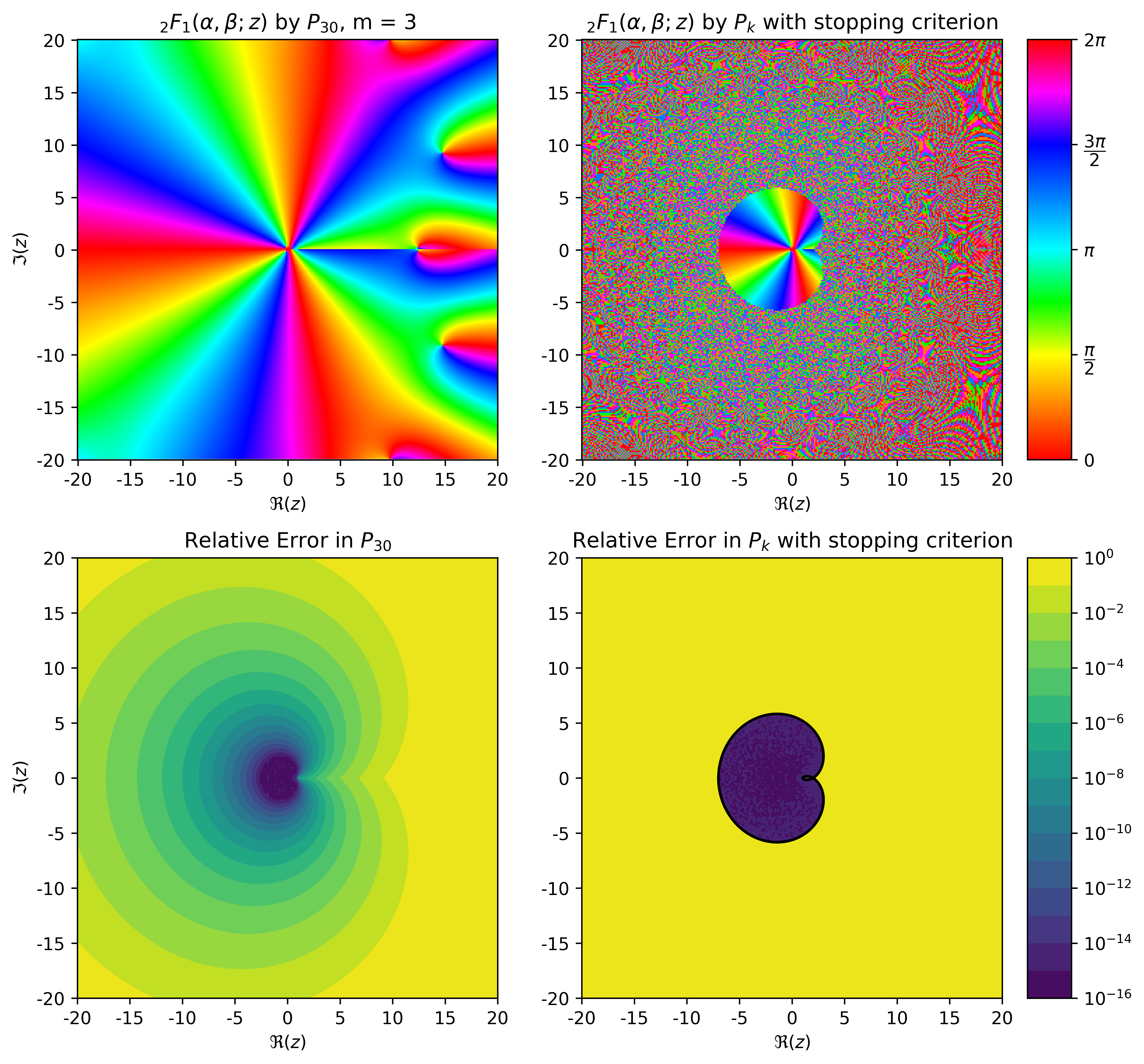}
\end{center}
\caption{Approximation of $\pFq{2}{1}((1, 7/2), (-7/4,), z)$ by Maclaurin series after the polynomial map of Jacobs, Piret, and Fornberg~\cite{Jacobs-Piret-Fornberg-1-26} with $m=3$.}
\label{fig:2F1polm3}
\end{figure}

\begin{figure}[htbp]
\begin{center}
\includegraphics[width=\textwidth]{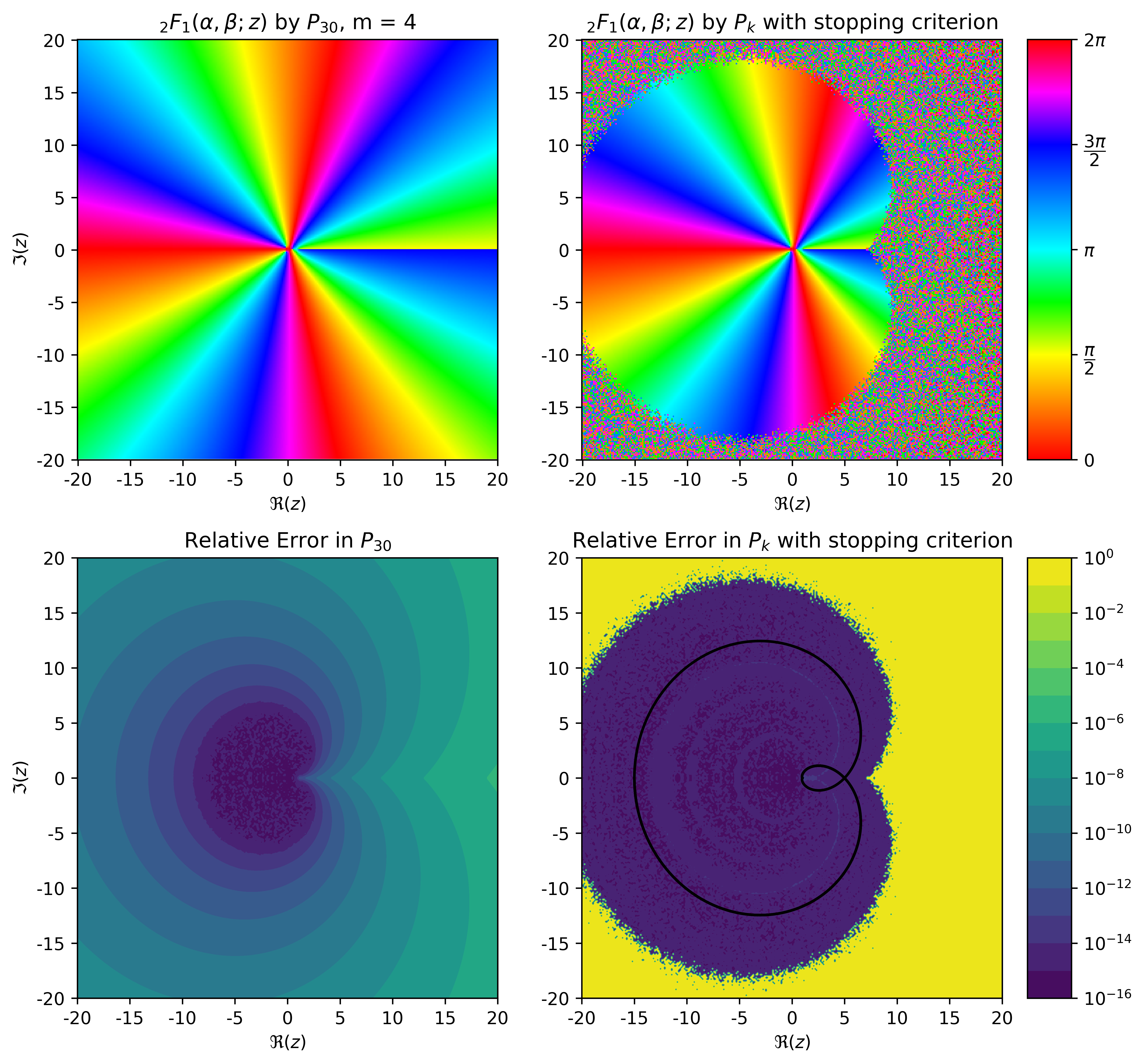}
\end{center}
\caption{Approximation of $\pFq{2}{1}((1, 7/2), (-7/4,), z)$ by Maclaurin series after the polynomial map of Jacobs, Piret, and Fornberg~\cite{Jacobs-Piret-Fornberg-1-26} with $m=4$.}
\label{fig:2F1polm4}
\end{figure}

\begin{figure}[htbp]
\begin{center}
\includegraphics[width=\textwidth]{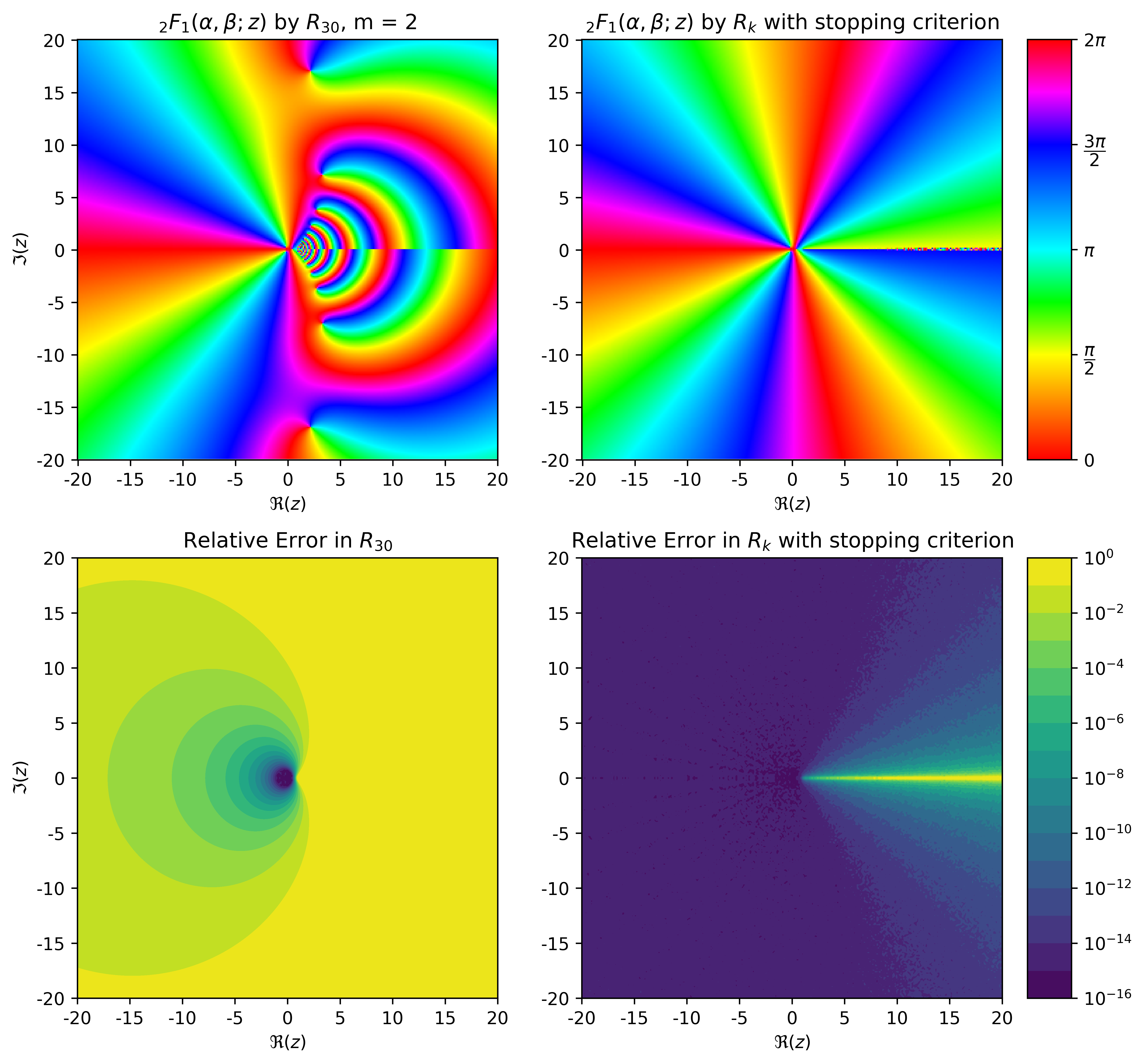}
\end{center}
\caption{Approximation of $\pFq{2}{1}((1, 7/2), (-7/4,), z)$ by Maclaurin series after our rational conformal map with $m=2$.}
\label{fig:2F1ratm2}
\end{figure}

\begin{figure}[htbp]
\begin{center}
\includegraphics[width=\textwidth]{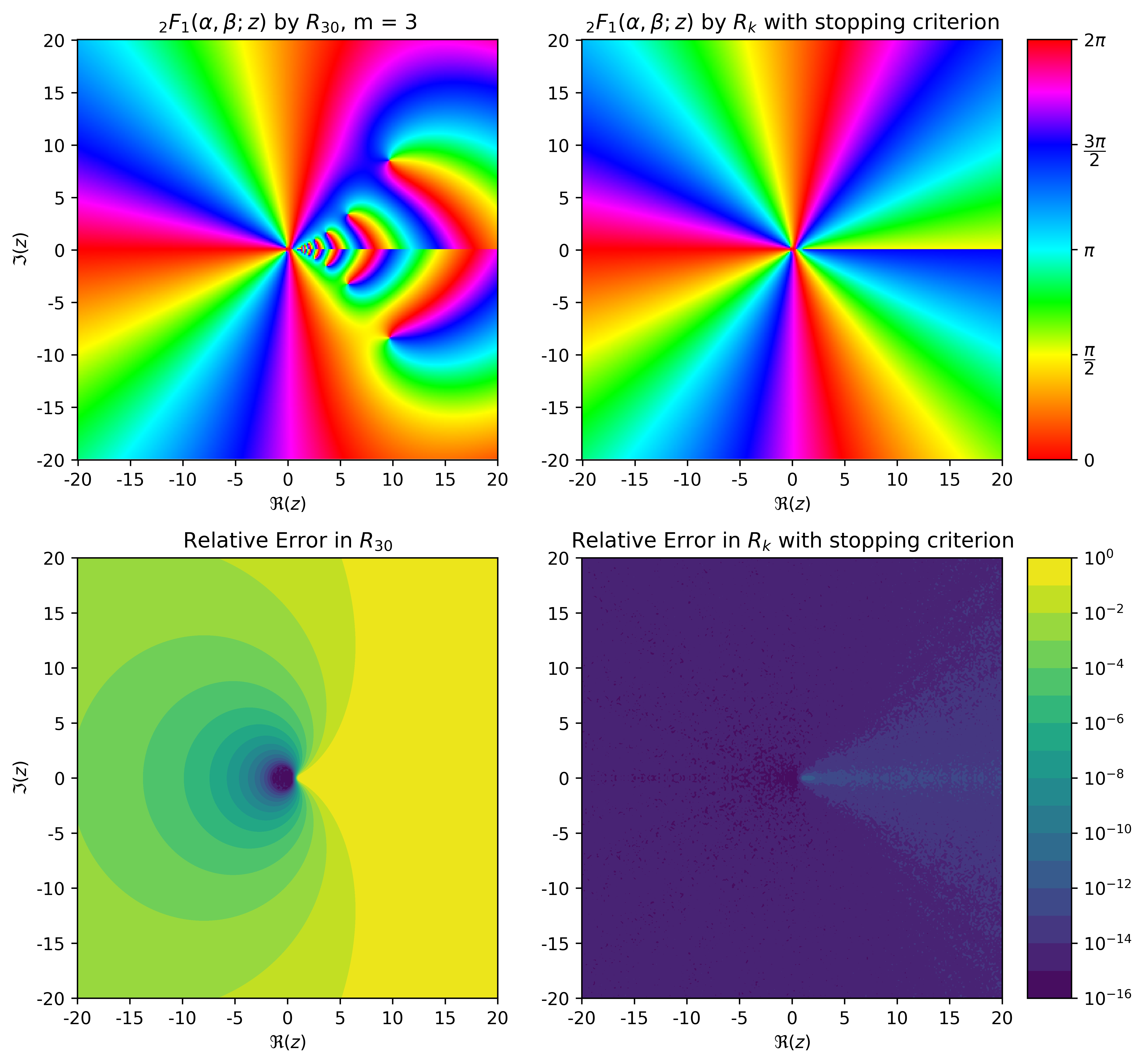}
\end{center}
\caption{Approximation of $\pFq{2}{1}((1, 7/2), (-7/4,), z)$ by Maclaurin series after our rational conformal map with $m=3$.}
\label{fig:2F1ratm3}
\end{figure}

\begin{figure}[htbp]
\begin{center}
\includegraphics[width=\textwidth]{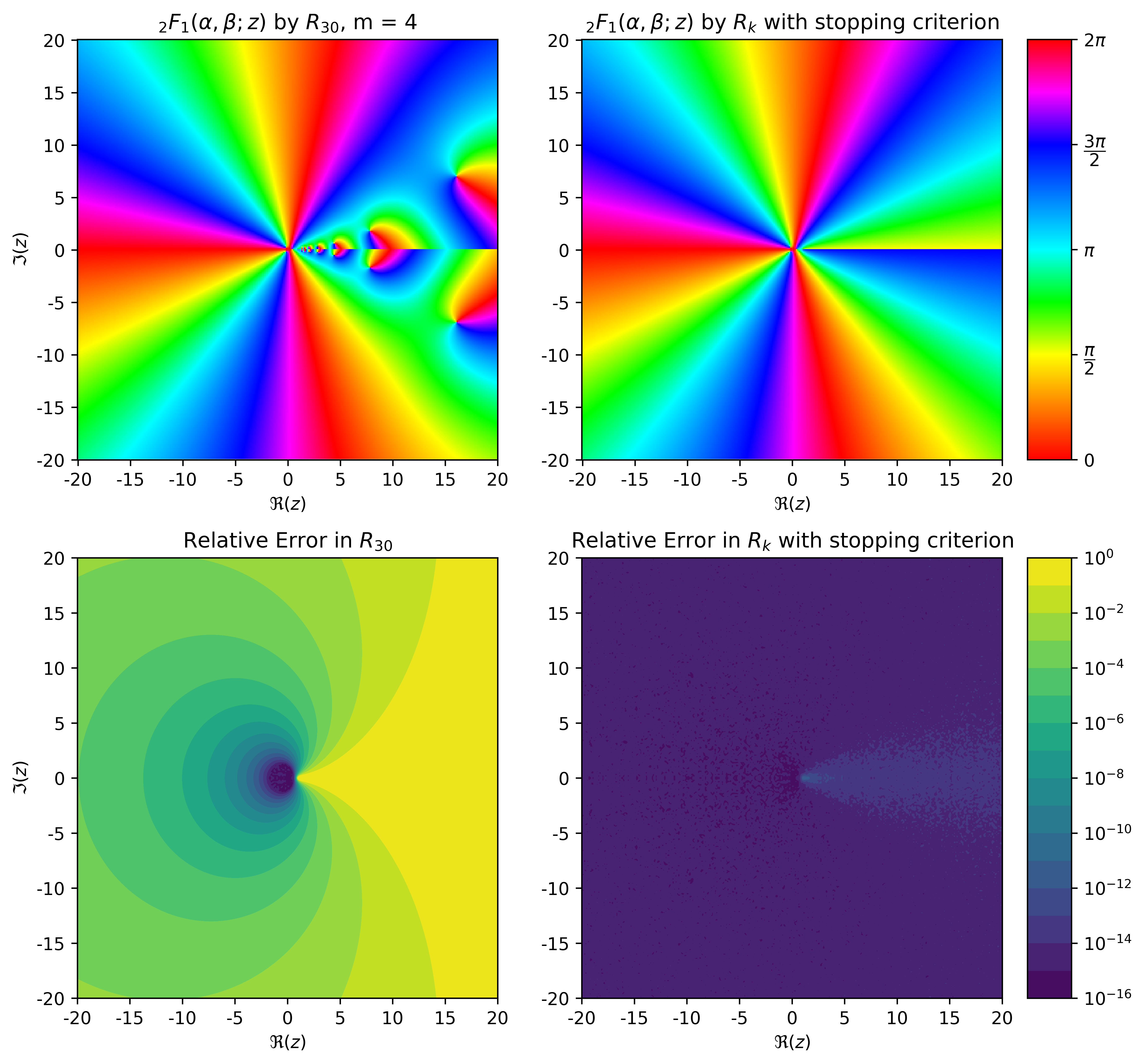}
\end{center}
\caption{Approximation of $\pFq{2}{1}((1, 7/2), (-7/4,), z)$ by Maclaurin series after our rational conformal map with $m=4$.}
\label{fig:2F1ratm4}
\end{figure}

%%%%%%%%%%%%%%%%%%%%%%%%%%%%%%%%%%%%%%%%%%%%%%%%%%%%%%%%%%%%%%%%%%%%%%%%%%%%%%%%%%%%%%%%%%%%%%%%%%%%%%%%%%%%%%%%
%%%%%%%%%%%%%%%%%%%%%%%%%%%%%%%%%%%%%%%%%%%%%%%%%%%%%%%%%%%%%%%%%%%%%%%%%%%%%%%%%%%%%%%%%%%%%%%%%%%%%%%%%%%%%%%%
%%%%%%%%%%%%%%%%%%%%%%%%%%%%%%%%%%%%%%%%%%%%%%%%%%%%%%%%%%%%%%%%%%%%%%%%%%%%%%%%%%%%%%%%%%%%%%%%%%%%%%%%%%%%%%%%
%%%%%%%%%%%%%%%%%%%%%%%%%%%%%%%%%%%%%%%%%%%%%%%%%%%%%%%%%%%%%%%%%%%%%%%%%%%%%%%%%%%%%%%%%%%%%%%%%%%%%%%%%%%%%%%%

\section{Conclusion \& future directions}

The surprising conclusions for analytic continuation of generalized hypergeometric functions to the cut complex plane $\C\setminus[1,\infty)$ are that: Schwarz--Christoffel mapping converges at the same exponential rate as approximate mapped Chebyshev series computed most efficiently by the ultraspherical spectral method; our family of rational conformal maps converges at faster exponential rates than the Schwarz--Christoffel map when we take $m=3$ and $4$; and, the factored form of the generalized hypergeometric differential equation leads to optimal complexities of ${\cal O}(qmn)$ flops and ${\cal O}(qm)$ storage for $n$-term Maclaurin series after transformation beating the product approach of ${\cal O}(q^2m^2n)$ flops and ${\cal O}(qmn)$ storage.

The natural extension is to consider rational mapping tailored to multiply-cut complex planes to solve Fuchsian differential equations with polynomial coefficients in a similar optimal complexity globally throughout the cut complex plane. Applications include Appell~\cite[\S 16.13]{Olver-et-al-NIST-10} and Heun~\cite[\S 31]{Olver-et-al-NIST-10} functions, all of which are defined by differential equations with two cuts in the complex plane. Hale and Tee have produced an optimized routine for Schwarz--Christoffel mapping with multiple infinite complex conjugate vertical slits~\cite{Hale-Tee-31-3195-09}. As with this case, see Eq.~\eqref{eq:SchwarzChristoffelSingleCut}, the integration is performed exactly, which removes the necessity for quadrature in the resolution of the so-called Schwarz--Christoffel parameter problem. Unfortunately, the exact integration for two or more slits reveals a logarithm at the finite slit vertices. These logarithms do not prevent analytic continuation to the cut plane, as their branch cuts are exterior to the unit disk; thus exponential convergence is guaranteed throughout the multiply-slit complex plane. However, the logarithms densify the erstwhile finite-length recurrence relations for the Maclaurin series in the transformed variable. Nevertheless, we are optimistic that rational conformal maps exist, and we explain this as follows. While every biholomorphism from the unit disk to the cut complex plane must be a Schwarz--Christoffel map, we have also shown that other rational conformal maps beat this construction. This apparent contradiction is resolved once it is realized that for $m\ge2$, our $r^{-1}(w;\gamma,m)$ maps {\em a subset} of the unit disk bijectively onto the cut complex plane. Computing such rational maps, however, is the open challenge.

Another avenue of exploration is to derive practical algorithms based on conformal maps for the class of generalized hypergeometric functions that are formally divergent series, $\pFq{p}{q}$ with $p > q+1$. The canonical example is the asymptotic expansion of the exponential integral~\cite[\S 6.2 \& \S 6.6]{Olver-et-al-NIST-10}:
\begin{align*}
\pFq{2}{0}(1,1;z) & = -\frac{e^{-\frac{1}{z}}}{z}E_1\left(-\frac{1}{z}\right) = -\frac{e^{-\frac{1}{z}}}{z} \int_1^\infty \frac{e^{t/z}}{t}\ud t,\\
& = \frac{e^{-\frac{1}{z}}}{z}\left(\gamma-\ln(-z)+\sum_{n=1}^\infty\frac{1}{n n! z^n}\right),\qquad z\in\C\setminus[0,\infty),\\
& \sim \sum_{n=0}^\infty n!z^n\quad{\rm as}\quad z\to 0.
\end{align*}
In this case, the origin is an irregular singular point of the differential equation~\eqref{eq:pFqODE}, manifesting as an essential but removable singularity. While, for $m\ge2$:
\[
w = \frac{1-\sqrt[m]{-z}}{1+\sqrt[m]{-z}},
\]
maps the corresponding cut complex plane into the open unit disk, the initial condition at the origin is now mapped to the {\em boundary} at $w=1$, rather than the center as with $r(z;\gamma,m)$. Further research could uncover how this technique could lead to a promising numerical approach.

\section*{Acknowledgments}

RMS is supported by the Natural Sciences and Engineering Research Council of Canada, through a Discovery Grant (RGPIN-2024-05480). JL is supported by a PIMS postdoctoral research fellowship.

%%%%%%%%%%%%%%%%%%%%%%%%%%%%%%%%%%%%%%%%%%%%%%%%%%%%%%%%%%%%%%%%%%%%%%%%%%%%%%%%%%%%%%%%%%%%%%%%%%%%%%%%%%%%%%%%
%%%%%%%%%%%%%%%%%%%%%%%%%%%%%%%%%%%%%%%%%%%%%%%%%%%%%%%%%%%%%%%%%%%%%%%%%%%%%%%%%%%%%%%%%%%%%%%%%%%%%%%%%%%%%%%%
%%%%%%%%%%%%%%%%%%%%%%%%%%%%%%%%%%%%%%%%%%%%%%%%%%%%%%%%%%%%%%%%%%%%%%%%%%%%%%%%%%%%%%%%%%%%%%%%%%%%%%%%%%%%%%%%
%%%%%%%%%%%%%%%%%%%%%%%%%%%%%%%%%%%%%%%%%%%%%%%%%%%%%%%%%%%%%%%%%%%%%%%%%%%%%%%%%%%%%%%%%%%%%%%%%%%%%%%%%%%%%%%%

\bibliography{/Users/Mikael/Bibliography/Mik}

%%%%%%%%%%%%%%%%%%%%%%%%%%%%%%%%%%%%%%%%%%%%%%%%%%%%%%%%%%%%%%%%%%%%%%%%%%%%%%%%%%%%%%%%%%%%%%%%%%%%%%%%%%%%%%%%
%%%%%%%%%%%%%%%%%%%%%%%%%%%%%%%%%%%%%%%%%%%%%%%%%%%%%%%%%%%%%%%%%%%%%%%%%%%%%%%%%%%%%%%%%%%%%%%%%%%%%%%%%%%%%%%%
%%%%%%%%%%%%%%%%%%%%%%%%%%%%%%%%%%%%%%%%%%%%%%%%%%%%%%%%%%%%%%%%%%%%%%%%%%%%%%%%%%%%%%%%%%%%%%%%%%%%%%%%%%%%%%%%
%%%%%%%%%%%%%%%%%%%%%%%%%%%%%%%%%%%%%%%%%%%%%%%%%%%%%%%%%%%%%%%%%%%%%%%%%%%%%%%%%%%%%%%%%%%%%%%%%%%%%%%%%%%%%%%%

\appendix

\section{Proof of theorem~\ref{theorem:Inequalities}}\label{Appendix:Inequalities}

\begin{proof}
It follows from the Schwarz--Christoffel construction that $\abs{r(z;-1,2)} \le1$. Next, by Skorokhodov's optimization argument, it is clearly true that:
\[
\abs{r(z;\gamma_{\rm opt},m)} \le \abs{r(z;-1,m)}.
\]
To show the first inequality in the chain, it will suffice to consider $n=m+1$. Observe that $r(1;\gamma,m) = 1$ for all $\gamma$ and $m\ge 1$. Thus, for all $z\ne1$, define $w=a+\ii b$ where $-\pi< b\le \pi$ so that:
\[
1-z = e^w = e^{a+\ii b}.
\]
Then the inequality is:
\[
\abs{\frac{1-e^{w/n}}{1+e^{w/n}}} \le \abs{\frac{1-e^{w/m}}{1+|e^{w/m}|}}.
\]
Equivalently:
\[
\frac{2\cos(b/n)}{\cosh(a/n)+\cos(b/n)} = 1 - \abs{\frac{1-e^{w/n}}{1+e^{w/n}}}^2 \ge 1 - \abs{\frac{1-e^{w/m}}{1+|e^{w/m}|}}^2 = \frac{\cos^2(b/2m)}{\cosh^2(a/2m)}.
\]
Clearing fractions, we find:
\[
2\cos(b/n)\left[1+\sinh^2(a/2m)\right] = 2\cos(b/n)\cosh^2(a/2m) \ge \left[\cosh(a/n)+\cos(b/n)\right]\cos^2(b/2m).
\]
Subtracting $\cosh(a/n)\cos^2(b/2m)$ from both sides:
\begin{align*}
\cos(b/n)\left[2-\cos^2(b/2m)\right] + 2\cos(b/n)\sinh^2(a/2m) & \ge \cos^2(b/2m)\cosh(a/n),\\
\Leftrightarrow\quad\cos(b/n)\left[2-\cos^2(b/2m)\right] + 2\cos(b/n)\sinh^2(a/2m) & \ge \cos^2(b/2m) + \cos^2(b/2m)\left[\cosh(a/n)-1\right],\\
\Leftrightarrow\quad\cos(b/n)\left[2-\cos^2(b/2m)\right] + 2\cos(b/n)\sinh^2(a/2m) & \ge \cos^2(b/2m) + 2\cos^2(b/2m)\sinh^2(a/2n).
\end{align*}
This inequality is true if the following two inequalities are separately true:
\begin{equation}\label{eq:InEqPart1}
\cos(b/n)\left[2-\cos^2(b/2m)\right] \ge \cos^2(b/2m),
\end{equation}
and:
\begin{equation}\label{eq:InEqPart2}
\cos(b/n)\sinh^2(a/2m) \ge \cos^2(b/2m)\sinh^2(a/2n).
\end{equation}
For Eq.~\eqref{eq:InEqPart1}:
\begin{align*}
\cos(b/n)\left[2-\cos^2(b/2m)\right] - \cos^2(b/2m) & \ge 0,\\
1+\cos(b/n)-\cos^2(b/2m)-\cos(b/n)\cos^2(b/2m) & \ge 1 - \cos(b/n),\\
\left[1+\cos(b/n)\right]\left[1-\cos^2(b/2m)\right] & \ge 1 - \cos(b/n),\\
\frac{1-\cos(b/n)}{1+\cos(b/n)} & \le 1-\cos^2(b/2m),\\
\tan^2(b/2n) & \le \sin^2(b/2m).
\end{align*}
Without loss of generality, we work with $0\le b\le\pi$ and $n = m+1$ to show that:
\[
\tan(b/(2m+2)) \le \sin(b/2m).
\]
With $m=1$, we find $\tan(b/4) \le \sin(b/2)$. This is equivalent to $2\cos(b/4)^2 \ge 1$, which is true.

With $m\ge 2$, letting $x = b/2m$ and $y = rx$ where $r = m/(m+1)$, since for $x>0$ and $0<y<\sqrt{2}$:
\[
\sin x \ge x-\frac{x^3}{6},\quad{\rm and}\quad \tan y \le \frac{y}{1-y^2/2},
\]
we have:
\begin{align*}
\left(x-\frac{x^3}{6}\right)\left(1-\frac{r^2x^2}{2}\right) & \ge rx,\\
\left(1-\frac{x^2}{6}\right)\left(1-\frac{r^2x^2}{2}\right) \ge 1-\left(\frac{1}{6}+\frac{r^2}{2}\right)x^2 & \ge r,
\end{align*}
or:
\[
b^2 \le \frac{4m^2}{(m+1)\left(\frac{1}{6}+\frac{m^2}{2(m+1)^2}\right)} = \frac{24(m+1)}{4+\frac{2}{m}+\frac{1}{m^2}}.
\]
Now, the numerator is increasing in $m$ while the denominator is decreasing. The inequality holds for all $b^2 \le 96/7$, which is greater than $\pi^2$.

For Eq.~\eqref{eq:InEqPart2}, since $\sinh^2(a/2m) \ge (n/m)^2\sinh^2(a/2n)$, we find:
\begin{equation}\label{eq:InEqPart2Trig}
n^2\cos(b/n) \ge m^2\cos^2(b/2m)
\end{equation}
Again, we work with $0\le b\le\pi$ and $n = m+1$ so that $\cos(b/n) \ge \cos(\pi/n)$ and $\cos^2(b/2m) \le 1$, or:
\[
\cos(\pi/n) \ge 1-\frac{\pi^2}{2n^2} \ge \frac{(n-1)^2}{n^2}.
\]
This is satisfied by $n \ge \frac{\pi^2+2}{4}$, a number slightly less than $3$. Observe that for $m = 1$ and $n=2$, Eq.~\eqref{eq:InEqPart2Trig} reads $4\cos(b/2) \ge \cos^2(b/2)$, or $4\ge\cos(b/2)$, which is clearly true.
\end{proof}

\section{On the ultraspherical spectral method}\label{appendix:ultrasphericalspectralmethod}

The ultraspherical spectral method of Olver and Townsend~\cite{Olver-Townsend-55-462-13,Olver-Slevinsky-Townsend-29-573-20} relies on representing differentiation, multiplication by \(x\), and conversion between certain orthogonal polynomial bases as sparse operators acting on coefficient sequences. In this appendix, we collect the basic relations used throughout the paper. These relations are standard consequences of the recurrence and differentiation formulas for Chebyshev and ultraspherical polynomials, but we state them in matrix form to fix notation and indexing conventions.

Let
\[
\bs{T}(x)
=
\begin{pmatrix}
T_0(x)&T_1(x)&T_2(x)&\cdots
\end{pmatrix}
\]
denote the first kind Chebyshev polynomial quasimatrix, and let
\[
\bs{C}^{(\lambda)}(x)
=
\begin{pmatrix}
C_0^{(\lambda)}(x)&C_1^{(\lambda)}(x)&C_2^{(\lambda)}(x)&\cdots
\end{pmatrix}
\]
denote the ultraspherical polynomial quasimatrix of order $\lambda>-\frac{1}{2}$, $\lambda\ne0$. Second kind Chebyshev polynomials are synonymous with ultraspherical polynomials of order $1$,
\(\bs{U}(x) \equiv \bs{C}^{(1)}(x)\).

\begin{proposition}[\S 18.9.1 and 18.9.2 in~\cite{NIST:DLMF}]\label{prop:XT}
Multiplication by $x$ of first kind Chebyshev polynomials:
\begin{equation}
x\bs{T}(x) = \bs{T}(x)\underbrace{\begin{pmatrix} 0 & \tfrac{1}{2}\\ 1 & 0 & \tfrac{1}{2}\\ & \tfrac{1}{2} & 0 & \ddots\\ & & \ddots & \ddots\end{pmatrix}}_{X_T}.
\end{equation}
\end{proposition}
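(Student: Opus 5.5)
The plan is to verify the matrix identity column by column: the $n$th column of $X_T$ must encode the expansion of $xT_n(x)$ in the basis $\bs{T}(x)$. The tool is the three-term recurrence for first kind Chebyshev polynomials, $T_{n+1}(x) = 2xT_n(x) - T_{n-1}(x)$ for $n\ge1$, together with $T_0(x)=1$ and $T_1(x)=x$. The trigonometric definition $T_n(\cos\theta)=\cos(n\theta)$ makes the recurrence immediate, since $\cos((n+1)\theta)+\cos((n-1)\theta) = 2\cos\theta\cos(n\theta)$. This is exactly the content of \S 18.9.1 in~\cite{NIST:DLMF}.

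The first step is to rearrange the recurrence as
\[
xT_n(x) = \tfrac12 T_{n-1}(x) + \tfrac12 T_{n+1}(x),\qquad n\ge1.
\]
With columns indexed from $0$, this says that column $n\ge1$ of $X_T$ has entry $\tfrac12$ in row $n-1$ (the superdiagonal), entry $\tfrac12$ in row $n+1$ (the subdiagonal), and zeros elsewhere, including the diagonal.

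The second step treats the exceptional column $n=0$. Here $xT_0(x) = x = T_1(x)$, so column $0$ has a single entry $1$ in row $1$. This accounts for the lone $1$ in the $(1,0)$ position of the displayed matrix, in place of $\tfrac12$.

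The final step is to assemble the columns. Because $x\bs{T}(x)$ is the quasimatrix whose $n$th column is $xT_n(x)$, and $\bs{T}(x)X_T$ has $n$th column $\sum_k (X_T)_{kn}T_k(x)$, the two agree entrywise. This yields the tridiagonal matrix with zero diagonal, $\tfrac12$ on the superdiagonal, and $1,\tfrac12,\tfrac12,\ldots$ on the subdiagonal.

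There is no genuine obstacle here. The only point needing care is the boundary case $n=0$, which breaks the symmetric pattern, and keeping the row and column indexing consistent with the displayed matrix.
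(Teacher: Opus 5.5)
Your proposal is correct and takes the same route as the paper: the paper gives no argument of its own beyond citing the DLMF recurrence (\S 18.9.1--18.9.2). Your column-by-column reading of $xT_n = \tfrac12 T_{n-1} + \tfrac12 T_{n+1}$ for $n\ge1$, together with the exceptional case $xT_0 = T_1$, is precisely what that citation encodes.
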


\begin{proposition}[\S 18.9.1 and 18.9.2 in~\cite{NIST:DLMF}]\label{prop:Xlambda}
Multiplication by $x$ of ultraspherical polynomials:
\begin{equation}
x\bs{C}^{(\lambda)}(x) = \bs{C}^{(\lambda)}(x)\underbrace{\begin{pmatrix}
0 & \dfrac{2\lambda}{2(1+\lambda)} & & & \\
\dfrac{1}{2\lambda} & 0 & \dfrac{1+2\lambda}{2(2+\lambda)} & & \\
& \dfrac{2}{2(1+\lambda)} & 0 & \dfrac{2+2\lambda}{2(3+\lambda)} & \\
& & \dfrac{3}{2(2+\lambda)} & 0 & \ddots\\
& & & \ddots & \ddots
\end{pmatrix}}_{X_{(\lambda)}}.
\end{equation}
\end{proposition}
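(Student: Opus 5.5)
The plan is to read the matrix off column by column from the standard three-term recurrence for the ultraspherical (Gegenbauer) polynomials, DLMF \S 18.9.1 with the coefficients of \S 18.9.2:
\[
(n+1)C_{n+1}^{(\lambda)}(x) = 2(n+\lambda)\,x\,C_n^{(\lambda)}(x) - (n+2\lambda-1)C_{n-1}^{(\lambda)}(x),\qquad n\ge0,
\]
with the convention $C_{-1}^{(\lambda)}\equiv0$ and $C_0^{(\lambda)}\equiv1$. Since $\lambda>-\tfrac12$ and $\lambda\ne0$, we have $n+\lambda\ne0$ for every $n\ge0$. We may therefore divide by $2(n+\lambda)$ and solve for $x C_n^{(\lambda)}$:
\[
x\,C_n^{(\lambda)}(x) = \frac{n+2\lambda-1}{2(n+\lambda)}\,C_{n-1}^{(\lambda)}(x) + \frac{n+1}{2(n+\lambda)}\,C_{n+1}^{(\lambda)}(x).
\]

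Under the quasimatrix convention $x\bs{C}^{(\lambda)}(x)=\bs{C}^{(\lambda)}(x)X_{(\lambda)}$, column $n$ of $X_{(\lambda)}$ (indexing from $0$) holds the coefficients of $xC_n^{(\lambda)}$ in the basis $\{C_k^{(\lambda)}\}$. The identity above shows that $X_{(\lambda)}$ is tridiagonal with zero diagonal, with
\[
(X_{(\lambda)})_{n+1,n}=\frac{n+1}{2(n+\lambda)},\qquad (X_{(\lambda)})_{n-1,n}=\frac{n+2\lambda-1}{2(n+\lambda)}.
\]

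It remains to compare these with the displayed entries.
\begin{itemize}
\item For $n=0$ the subdiagonal entry is $1/(2\lambda)$. This agrees with the direct computation $C_1^{(\lambda)}(x)=2\lambda x$, i.e.\ $x=C_1^{(\lambda)}/(2\lambda)$, and the $C_{-1}$ term vanishes.
\item For $n=1$ the entries are $2\lambda/(2(1+\lambda))$ above the diagonal and $2/(2(1+\lambda))$ below it.
\item For $n=2$ the superdiagonal entry is $(1+2\lambda)/(2(2+\lambda))$ and the subdiagonal entry is $3/(2(2+\lambda))$, continuing the same pattern.
\end{itemize}
These reproduce the matrix as printed.

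No step is a genuine obstacle. The only care needed is the bookkeeping: column $n$ must give the coefficients of $xC_n^{(\lambda)}$, not row $n$, and the boundary column $n=0$ must be handled through $C_{-1}^{(\lambda)}=0$. If one prefers not to cite the recurrence, it can be derived from the generating function $(1-2xt+t^2)^{-\lambda}=\sum_n C_n^{(\lambda)}(x)t^n$. Differentiate in $t$ and multiply through by $1-2xt+t^2$, then compare coefficients of $t^n$.
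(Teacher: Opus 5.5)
Your proposal is correct and is essentially the paper's route: the paper gives no argument beyond citing the DLMF three-term recurrence (18.9.1 with the ultraspherical coefficients of 18.9.2), and solving it for $xC_n^{(\lambda)}$ and reading off column $n$ is exactly what that citation amounts to. Your entry checks, including the boundary column via $C_{-1}^{(\lambda)}\equiv 0$ and the nonvanishing of $n+\lambda$ for $\lambda>-\tfrac12$, $\lambda\ne0$, all agree with the displayed matrix.
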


\begin{proposition}[\S 18.9.9 in~\cite{NIST:DLMF}]\label{prop:RTU}
Conversion of first kind Chebyshev polynomials:
\begin{equation}
\bs{T}(x) = \bs{U}(x)\underbrace{\begin{pmatrix} 1 & 0 & -\tfrac{1}{2}\\ & \tfrac{1}{2} & 0 & \ddots\\ & & \tfrac{1}{2} & \ddots\\ & & & \ddots\end{pmatrix}}_{R_T^U}.
\end{equation}
\end{proposition}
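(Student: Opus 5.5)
The plan is to verify Proposition~\ref{prop:RTU} column by column. The $n$th column of $R_T^U$ encodes the expansion of $T_n$ in the basis $\{U_k\}$. Reading off the displayed matrix, the claim is equivalent to the three identities
\[
T_0(x) = U_0(x),\qquad T_1(x) = \tfrac{1}{2}U_1(x),\qquad T_n(x) = \tfrac{1}{2}\left[U_n(x) - U_{n-2}(x)\right],\quad n\ge2.
\]
That is, the main diagonal is $(1,\tfrac12,\tfrac12,\ldots)$, the second superdiagonal is constantly $-\tfrac12$, and all other entries vanish.

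First I would dispose of the two low-degree columns directly from $T_0=U_0=1$, $T_1=x$ and $U_1=2x$.

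For the general column I would use the trigonometric characterizations on $x=\cos\theta$, $\theta\in(0,\pi)$:
\[
T_n(\cos\theta)=\cos n\theta,\qquad U_n(\cos\theta)=\frac{\sin((n+1)\theta)}{\sin\theta}.
\]
The sum-to-product identity gives
\[
\sin((n+1)\theta)-\sin((n-1)\theta)=2\cos(n\theta)\sin\theta.
\]
Dividing by $2\sin\theta$ yields $\tfrac12[U_n-U_{n-2}]=T_n$ on $(-1,1)$. Both sides are polynomials, so the identity extends to all $x$.

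An alternative, purely algebraic route would be induction on $n$ using the shared three-term recurrence $P_{n+1}=2xP_n-P_{n-1}$. This recurrence holds for both families (it is Proposition~\ref{prop:Xlambda} with $\lambda=1$ for $U$, and Proposition~\ref{prop:XT} for $T$). The differences $\tfrac12(U_n-U_{n-2})$ then satisfy the same recurrence, and they match $T_2$ and $T_3$ at the base.

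There is no real obstacle here. The only point requiring care is the indexing convention: the quasimatrix identity $\bs{T}=\bs{U}R_T^U$ places the coefficients of $T_n$ in column $n$, not row $n$. One must check that the $-\tfrac12$ entries therefore sit two places above the diagonal, in row $n-2$ of column $n$.
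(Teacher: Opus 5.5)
Your proposal is correct and matches the paper's approach. The paper justifies the proposition only by citing DLMF \S 18.9.9, which is exactly the column identity $T_n(x)=\tfrac12\left[U_n(x)-U_{n-2}(x)\right]$ for $n\ge2$, together with $T_0=U_0$ and $T_1=\tfrac12U_1$, that you read off from $R_T^U$. Your sum-to-product argument on $x=\cos\theta$ (or, alternatively, the induction on the shared three-term recurrence) supplies the short proof of that standard identity that the paper leaves to the citation.
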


\begin{proposition}[\S 18.9.7 in~\cite{NIST:DLMF}]\label{prop:Rlambda}
Conversion of ultraspherical polynomials:
\begin{equation}
\bs{C}^{(\lambda)}(x) = \bs{C}^{(\lambda+1)}(x)\underbrace{\begin{pmatrix}
1 & 0 & -\dfrac{\lambda}{\lambda+2} &  &  & \\
  & \dfrac{\lambda}{\lambda+1} & 0 & -\dfrac{\lambda}{\lambda+3} &  & \\
  &  & \dfrac{\lambda}{\lambda+2} & 0 & -\dfrac{\lambda}{\lambda+4} & \\
  &  &  & \dfrac{\lambda}{\lambda+3} & 0 & \ddots\\
  &  &  &  & \ddots & \ddots
\end{pmatrix}}_{R_{(\lambda)}^{(\lambda+1)}}.
\end{equation}
\end{proposition}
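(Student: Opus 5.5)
We will prove the column-by-column identity
\[
C_n^{(\lambda)}(x) = \frac{\lambda}{n+\lambda}\left[C_n^{(\lambda+1)}(x) - C_{n-2}^{(\lambda+1)}(x)\right],\qquad n\ge0,
\]
with the convention $C_{-1}^{(\lambda+1)}\equiv C_{-2}^{(\lambda+1)}\equiv0$. This is exactly the statement of the proposition in quasimatrix form. Column $n$ of $R_{(\lambda)}^{(\lambda+1)}$ has the entry $\lambda/(\lambda+n)$ on the diagonal and $-\lambda/(\lambda+n)$ two rows above. For $n=0$ this reads $1$, and for $n=1$ it reads $\lambda/(\lambda+1)$ with no superdiagonal entry, which matches the displayed matrix.

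To obtain the identity we use the generating function
\[
G_\lambda(x,t) = (1-2xt+t^2)^{-\lambda} = \sum_{n=0}^\infty C_n^{(\lambda)}(x)t^n,
\]
valid for $\lambda>-\frac12$, $\lambda\ne0$, and for $\abs{t}$ small. The key observation is that $G_\lambda = (1-2xt+t^2)G_{\lambda+1}$. We then apply the Euler-type operator $t\partial_t+\lambda$. Differentiating,
\[
t\partial_t G_\lambda = \lambda(2xt-2t^2)G_{\lambda+1}.
\]
Adding $\lambda G_\lambda = \lambda(1-2xt+t^2)G_{\lambda+1}$, the terms in $x$ cancel and we get
\[
(t\partial_t+\lambda)G_\lambda = \lambda(1-t^2)G_{\lambda+1}.
\]

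Expanding both sides in powers of $t$, the left side is $\sum_n (n+\lambda)C_n^{(\lambda)}t^n$. The right side is $\lambda\sum_n\left[C_n^{(\lambda+1)}-C_{n-2}^{(\lambda+1)}\right]t^n$. Equating coefficients of $t^n$ and dividing by $n+\lambda$ yields the identity. The division is legitimate because $n+\lambda>0$ for $n\ge1$ when $\lambda>-\frac12$, and for $n=0$ because $\lambda\ne0$.

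The only delicate point is bookkeeping rather than analysis. We must check the low-index columns $n=0,1$, where the truncated $C_{n-2}^{(\lambda+1)}$ term vanishes. We must also check that the placement of $-\lambda/(\lambda+n)$ in row $n-2$ of column $n$ matches the displayed superdiagonal pattern $0,-\lambda/(\lambda+2),-\lambda/(\lambda+3),\ldots$. Both checks are immediate once the column identity is in hand.
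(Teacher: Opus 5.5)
Your proof is correct. The identity $(t\partial_t+\lambda)G_\lambda=\lambda(1-t^2)G_{\lambda+1}$ holds, and matching coefficients gives $(n+\lambda)C_n^{(\lambda)}=\lambda\bigl[C_n^{(\lambda+1)}-C_{n-2}^{(\lambda+1)}\bigr]$. Your reading of column $n$ of $R_{(\lambda)}^{(\lambda+1)}$ also matches the displayed matrix: $\lambda/(\lambda+n)$ on the diagonal and $-\lambda/(\lambda+n)$ in row $n-2$, including the degenerate columns $n=0,1$.

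The paper gives no argument of its own. It simply cites DLMF \S 18.9.7, which is exactly the column identity you derive. The reduction of the quasimatrix statement to that three-term relation is therefore the same in both. The only difference is that you supply a self-contained proof of the relation from the Gegenbauer generating function, where the paper takes it as known.

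One small remark: your derivation uses the hypothesis $\lambda>-\tfrac12$, $\lambda\neq0$ only to ensure $n+\lambda\neq0$. It therefore holds for every $\lambda\notin\{0,-1,-2,\ldots\}$.
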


\begin{proposition}[\S 18.9.21 in~\cite{NIST:DLMF}]\label{prop:DTU}
Differentiation of first kind Chebyshev polynomials:
\begin{equation}
\DD \bs{T}(x) = \bs{U}(x)\underbrace{\begin{pmatrix} 0 & 1\\ & & 2\\ & & & 3\\ & & & & \ddots\end{pmatrix}}_{D_T^U}.
\end{equation}
\end{proposition}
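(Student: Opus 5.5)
The plan is to prove the scalar identity $T_n'(x) = n\,U_{n-1}(x)$ for every $n\ge1$, together with $T_0'(x)=0$, and then read off the matrix. Column $n$ (indexing from $0$) of $D_T^U$ is supposed to contain the single entry $n$ in row $n-1$. Hence the claimed quasimatrix identity $\DD\bs{T}(x) = \bs{U}(x)D_T^U$ is, column by column, exactly the statement $\DD T_n = n\,U_{n-1}$. The first column of $D_T^U$ is zero, which matches $T_0\equiv1$.

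To establish the scalar identity I will use the trigonometric characterizations
\[
T_n(\cos\theta)=\cos n\theta,\qquad U_{n-1}(\cos\theta)=\frac{\sin n\theta}{\sin\theta}.
\]
The second is the definition of second kind Chebyshev polynomials, consistent with $\bs{U}\equiv\bs{C}^{(1)}$. Set $x=\cos\theta$ with $\theta\in(0,\pi)$, so that $\ud x/\ud\theta=-\sin\theta\neq0$. The chain rule then gives
\[
T_n'(x)=\frac{\ud}{\ud\theta}\cos n\theta\Big/\frac{\ud x}{\ud\theta} = \frac{-n\sin n\theta}{-\sin\theta} = n\,U_{n-1}(\cos\theta).
\]
Both sides are polynomials in $x$ that agree on the open interval $(-1,1)$. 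By the identity theorem for polynomials they agree for all $x\in\C$, and in particular at the endpoints $x=\pm1$, where $\sin\theta=0$.

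There is no genuine obstacle here. The only points requiring care are bookkeeping: the indexing convention, under which the superdiagonal of $D_T^U$ reads $1,2,3,\ldots$ starting at column $1$, and the removable singularity at $\theta\in\{0,\pi\}$, which the polynomial-identity argument handles. Alternatively, the identity can be cited directly as DLMF \S18.9.21.
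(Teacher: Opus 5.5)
Your proof is correct. The paper gives no argument of its own; it only cites DLMF \S 18.9.21 for the scalar identity $T_n'(x)=n\,U_{n-1}(x)$.

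What you add is the standard self-contained derivation via $x=\cos\theta$, extended to all $x$ by the polynomial identity argument, together with the column-by-column reading of $D_T^U$ (entry $n$ in row $n-1$ of column $n$, and a zero first column). That column reading is the step needed to turn the cited scalar formula into the stated quasimatrix identity with the paper's indexing convention.
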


\begin{proposition}[\S 18.9.19 in~\cite{NIST:DLMF}]\label{prop:Dlambda}
Differentiation of ultraspherical polynomials:
\begin{equation}
\DD \bs{C}^{(\lambda)}(x) = \bs{C}^{(\lambda+1)}(x)\underbrace{\begin{pmatrix}
0&2\lambda& & & \\
 & &2\lambda& & \\
 & & &2\lambda& \\
 & & & &\ddots
\end{pmatrix}}_{D_{(\lambda)}^{(\lambda+1)}}.
\end{equation}
\end{proposition}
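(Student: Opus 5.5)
The plan is to prove the scalar identity
\[
\frac{\ud}{\ud x} C_n^{(\lambda)}(x) = 2\lambda\, C_{n-1}^{(\lambda+1)}(x),\qquad n\ge1,\qquad \frac{\ud}{\ud x} C_0^{(\lambda)}(x) = 0,
\]
and then read off the matrix $D_{(\lambda)}^{(\lambda+1)}$ column by column. The most economical route is the generating function for ultraspherical polynomials, which holds for $\lambda>-\frac{1}{2}$, $\lambda\ne0$, and $|t|$ small:
\[
\sum_{n=0}^\infty C_n^{(\lambda)}(x)\,t^n = (1-2xt+t^2)^{-\lambda}.
\]

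First, I will differentiate both sides with respect to $x$. For fixed $x\in[-1,1]$ the series converges absolutely and locally uniformly in $x$ for $|t|<1$, so termwise differentiation is justified. The right-hand side gives
\[
2\lambda t\,(1-2xt+t^2)^{-\lambda-1}.
\]
Applying the same generating function with $\lambda$ replaced by $\lambda+1$ (admissible since $\lambda+1>\frac{1}{2}$), this equals
\[
2\lambda\sum_{n=0}^\infty C_n^{(\lambda+1)}(x)\,t^{n+1}.
\]

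Second, I will compare coefficients of $t^n$. The coefficient of $t^0$ yields $\frac{\ud}{\ud x}C_0^{(\lambda)}=0$, and the coefficient of $t^n$ for $n\ge1$ yields the displayed identity. As a sanity check, the case $n=1$ uses $C_1^{(\lambda)}(x)=2\lambda x$ and $C_0^{(\lambda+1)}=1$, which agrees.

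Finally, I will translate to quasimatrix form. Writing $\DD\bs{C}^{(\lambda)}(x)=\bs{C}^{(\lambda+1)}(x)D$, the $n$-th column of $D$ (zero-indexed) holds the $\bs{C}^{(\lambda+1)}$-coefficients of $\DD C_n^{(\lambda)}$. This column is therefore zero for $n=0$ and has the single entry $2\lambda$ in row $n-1$ for $n\ge1$, so $D$ is exactly the superdiagonal matrix displayed in the statement.

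The only delicate point is the bookkeeping of zero-based indexing in the matrix, so that the entry lands on the first superdiagonal rather than the diagonal. The analytic justification for differentiating the generating function termwise is routine. The identity can alternatively be cited directly from \S 18.9.19 of the DLMF.
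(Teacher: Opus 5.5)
Your proposal is correct. The paper gives no argument of its own: the proposition is transcribed from DLMF \S 18.9.19. That is, the scalar identity $\frac{\ud}{\ud x}C_n^{(\lambda)}(x)=2\lambda\, C_{n-1}^{(\lambda+1)}(x)$ is taken as known, and the matrix is read off column by column exactly as in your final step.

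Your genuine addition is a derivation of that identity from the generating function $\sum_{n\ge0} C_n^{(\lambda)}(x)t^n=(1-2xt+t^2)^{-\lambda}$. This makes the argument self-contained. It also shows explicitly that the factor $2\lambda$ comes from the DLMF normalization. The same normalization is implicit in the paper's other matrices; for example, the entry $1/(2\lambda)$ in $X_{(\lambda)}$ reflects $C_1^{(\lambda)}(x)=2\lambda x$. The citation is shorter but leaves that convention unstated.

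Your justification of termwise differentiation is slightly loose as written: it mixes ``fixed $x$'' with ``locally uniformly in $x$.'' Either of two simpler routes makes it airtight. You can regard both sides as formal power series in $t$ with coefficients in $\R[x]$, where $\partial_x$ acts coefficientwise and the chain rule holds formally. Alternatively, write $C_n^{(\lambda)}(x)=\frac{1}{2\pi\ii}\oint(1-2xt+t^2)^{-\lambda}t^{-n-1}\ud t$ on a small circle and differentiate under the integral sign.
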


\end{document}